\documentclass[10pt]{article}

\usepackage{amsmath,amsfonts,amssymb}
\usepackage{epsf}
\RequirePackage{amsfonts,amssymb,amsmath,amscd,amsthm}
\usepackage{epsfig,tabularx,graphicx,float}
\usepackage{graphicx}
\linespread{1.5}
\usepackage[a4paper]{geometry}

\usepackage{parskip}
\usepackage[utf8]{inputenc}
\usepackage{times}
\usepackage{float}
\usepackage{multicol}
\usepackage{multirow}

\usepackage{float}
\newtheorem{theorem}{Theorem}[section]
\newtheorem{proposition}[theorem]{Proposition}

\newtheorem{definition}[theorem]{Definition}
\newtheorem{corollary}[theorem]{Corollary}

\newtheorem{lemma}[theorem]{Lemma}
\begin{document}

\begin{flushleft}
Prasadini Mahapatra \footnote{\textbf{Prasadini Mahapatra}, Department of Mathematics, National Institute of Technology Rourkela, Odisha, India} and Divya Singh \footnote{\textbf{Corresponding Author: Divya Singh}, Department of Mathematics, National Institute of Technology Rourkela, Odisha, India}
\end{flushleft}

\begin{flushleft}
\textbf{\Large Shift invariant space and FMRA on Vilenkin group}\\[1 cm]
\end{flushleft}

\noindent \textbf{Abstract:} 
We construct the spectrum for a shift invariant space on Vilenkin group. We prove the results related to spectrum and Frame multiresolution analysis for Cantor dyadic group and Vilenkin group.

\noindent \textbf{Keywords:} Spectrum, FMRA, Vilenkin Group

\noindent \textbf{MSC 2010:} 42C15, 42C40

\section{Introduction}\label{sec1}
In the theory of wavelet, Multiresolution analysis (MRA) is used in broad areas to obtain wavelets, scaling functions and frames.   In recent years, the more general theory is developed such as Generalized multiresolution analysis(GMRA), Frame multiresolution analysis(FMRA), Parseval frame multiresolution analysis(PFMRA) etc. The authors like Schipp, Wade, Simon, Golubov, Efimov, Skvortsov, Maqusi, Siddiqi, Beauchamp developed the theory related to Walsh analysis. In 1923, J. Walsh introduced Walsh functions as a linear combination of Haar functions. In the 1940s Gelfand recognized that Walsh functions are identified with characters of the Cantor dyadic group. There is a lot of work done on locally compact abelian group. For every locally compact abelian group, there exists a Haar measure, which is not identically zero but unique up to a multiplicative constant. N. J. Fine and N. Ya Vilenkin independently determined that the Walsh system is the group of characters of the Cantor dyadic group. A large class of locally compact abelian groups was introduced by  N. Ya Vilenkin, called Vilenkin groups. Cantor dyadic group is its particular case.
Lang \cite{lang1} determined the compactly supported orthogonal wavelets on the locally compact Cantor dyadic group. Shift operators such as dilation operator, multiresolution analysis(MRA) were built and some necessary regularity conditions for wavelets and sufficient conditions on scaling filters were given to occur orthonormal wavelets. The generalised Walsh functions form an orthonormal system in the Vilenkin group $G$ and mask of refinable equation is given in terms of these generalised Walsh functions. Refinable equation gives refinable function which generates MRA and hence wavelets, if the mask satisfies certain conditions. Necessary and sufficient conditions were given over the mask of scaling function $\phi$ in terms of modified Cohen's condition and blocked sets such that $\phi$ generates an MRA. Lang \cite{lang2} examined that the necessary and sufficient conditions were given on a trigonometric polynomial scaling filter resulting in a multiresolution analysis.

In the last decade, several authors such as Aldroubi, Benedetto, Bownik, de Boor, DeVore, Li, Ron, Rzeszotnik, Shen, Weiss and Wilson, Behera have been studied the shift invariant subspaces for $L^{2}$ space of $\mathbb{R}^{d}$ by different aspects. This theory plays an important role in many areas, especially in the theory of wavelets, and multiresolution
analysis. Shift-invariant spaces are very important in applications and the theory had a great development in the last twenty years, mainly in approximation theory, sampling, wavelets, and frames. In particular, they serve as models in many problems in signal and image processing. The concept of frames was first introduced in 1952 by R. Duffin and A. C. Schaefer in their work on nonharmonic analysis. 

This paper is organized as follows. It consists of two sections. In Section 2, we study the basic preliminaries on Vilenkin group and some results which is used later in the main results. Section 3 contains the spectrum of shift invariant spaces and frame multiresolution analysis for Cantor dyadic group and Vilenkin group.

\section{Preliminaries}\label{sec2}

The Vilenkin group $\tilde{G}$ is defined as the group of sequences
$$
x=(x_{j})=(..., 0, 0, x_{k}, x_{k+1}, x_{k+2},...),
$$
where $x_{j}\in \lbrace 0, 1, ..., p-1\rbrace $ for $ j \in \mathbb{Z}$ and $ x_{j}=0 $, for $ j < k = k(x)$. The group operation on $\tilde{G}$, denoted by $ \oplus $, is defined as coordinatewise addition modulo $ p $:
$$
(z_{j})=(x_{j})\oplus (y_{j})\Leftrightarrow z_{j}=x_{j} + y_{j}(\text{mod}\: p), \text{ for } j \in \mathbb{Z}.
$$
Let
$$
U_{l}=\lbrace(x_{j}) \in \tilde{G} : x_{j}=0\: \text{for}\: j\leq l\rbrace ,\qquad l\in \mathbb{Z},
$$
be a system of neighbourhoods of zero in $\tilde{G}$. In case of topological groups if we know neighbourhood system $\lbrace U_l \rbrace_{l \in \mathbb Z}$ of zero, then we can determine neighbourhood system of every point $x=(x_j) \in G$ given by $\lbrace U_l \oplus x \rbrace_{l \in \mathbb Z}$, which in turn generates a topology on $\tilde{G}$.

Let $ U=U_{0} $ and $ \ominus $ denotes the inverse operation of $ \oplus $. The Lebesgue spaces $ L^{q}(\tilde{G}),\: 1\leq q\leq \infty $, are defined by the Haar measure $ \mu $ on Borel's subsets of $\tilde{G}$ normalized by $ \mu(U)=1 $.

The group dual to $\tilde{G}$ is denoted by $ \tilde{G}^{*} $ and consists of all sequences of the form
$$
\omega=(\omega_{j})=(...,0,0,\omega_{k},\omega_{k+1},\omega_{k+2},...),
$$
where $ \omega_{j} \in \lbrace 0,1,...,p-1\rbrace $, for $ j \in \mathbb{Z} $ and $ \omega_j=0 $, for $ j<k=k(\omega) $. The operations of addition and subtraction, the neighbourhoods $\lbrace U_{l}^{*}\rbrace $ and the Haar measure $ \mu^{*} $ for $\tilde{G}^{*} $ are defined as above for $\tilde{G}$. Each character on $\tilde{G}$ is defined as
$$
\chi(x,\omega)=\exp\bigg(\frac{2\pi i}{p}\sum_{j\in \mathbb{Z}}{x_{j}w_{1-j}}\bigg),\quad x \in \tilde{G},
$$
for some $ \omega \in \tilde{G}^{*} $.

Let $ H = \lbrace (x_{j}) \in \tilde{G}\: |\: x_{j} =0 \; \text{ for } j>0\rbrace $ be a discrete subgroup in $\tilde{G}$  and $A$ be an automorphism on $\tilde{G}$ defined by $ (Ax)_j=x_{j+1} $, for $x=(x_j) \in \tilde{G}$. From the definition of annihilator and above definition of character $\chi$, it follows that the annihilator $ H^{\perp} $ of the subgroup $ H $ consists of all sequences $ (\omega_j) \in \tilde{G}^{*} $ which satisfy $ \omega_j=0 $ for $ j>0 $.

Let $ \lambda:\tilde{G}\longrightarrow \mathbb{R}_+ $ be defined by
\begin{equation*}
\lambda(x)=\sum_{j \in \mathbb{Z}}{x_{j} p^{-j}}, \qquad x=(x_j) \in \tilde{G}.
\end{equation*}
It is obvious that the image of $ H $ under $ \lambda $ is the set of non-negative integers $ \mathbb{Z}_{+} $. For every $ \alpha \in \mathbb{Z}_{+} $, let $ h_{[\alpha]} $ denote the element of $ H $ such that $ \lambda(h_{[\alpha]})=\alpha $. For $\tilde{G}^{*} $, the map $ \lambda^{*}:\tilde{G}^{*}\longrightarrow \mathbb{R}_{+} $, the automorphism $ B \in \text{Aut }\tilde{G}^{*} $, the subgroup $ U^{*} $ and the elements $ \omega_{[\alpha]} $ of $ H^{\perp} $ are defined similar to $ \lambda$, $A$, $U$ and $h_{[\alpha]}$, respectively. 

The generalised Walsh functions for $\tilde{G}$ are defined by
\begin{equation*}
W_\alpha(x)=\chi(x,\omega_{[\alpha]}),\qquad \alpha \in \mathbb{Z}_+, x \in \tilde{G}.
\end{equation*}
These functions form an orthogonal set for $L^2(U)$, that is,
\begin{equation*}
\int_{U}{W_{\alpha}(x) \overline{W_{\beta}(x)}d\mu(x)}=\delta_{\alpha,\beta},\qquad \alpha,\beta \in \mathbb{Z}_{+},
\end{equation*}
where $ \delta_{\alpha,\beta} $ is the Kronecker delta. The system $ {W_{\alpha}} $ is complete in $ L^{2}(U) $. The corresponding system for $ \tilde{G}^{*} $ is defined by
\begin{equation*}
W_{\alpha}^{*}(\omega)=\chi(h_{[\alpha]},\omega),\qquad \alpha \in \mathbb{Z}_{+}, \omega \in \tilde{G}^{*}.
\end{equation*}
The system $\lbrace W_{\alpha}^{*} \rbrace$ is an orthonormal basis of $ L^{2}(U^{*}) $.

For positive integers $ n $ and $ \alpha $,
\begin{equation*}
U_{n,\alpha}=A^{-n}(h_{[\alpha]}) \oplus A^{-n}(U).
\end{equation*}

\begin{definition}
A sequence $\{f_{j}:j\in J\}$ in a Hilbert space $H$ is called a Bessel sequence if there exists $B>0$ such that 
$$
\sum_{j \in J} \vert \langle f,f_{j} \rangle \vert ^{2}\leq B \Vert f \Vert^{2}, \quad \forall f \in H.
$$
\end{definition}

\begin{definition}
A sequence $\{f_{j}:j\in J\}$ in a Hilbert space $H$ is called a \emph{frame} if there exist constants $a$ and $b$, $0 <a \leq b < \infty$, such that
$$
a \Vert f \Vert ^{2} \leq \sum_{j \in J} \vert \langle f,f_{j} \rangle \vert ^{2}\leq b \Vert f \Vert^{2}, \quad \forall f \in H.
$$ 
\end{definition}
If $a=b$, then $(f_j)$ is called a \emph{tight frame}, and it is called a \emph{Parseval frame} if $a=b=1$.

The principal shift invariant space generated by $\psi$ is denoted as $\langle \psi \rangle$, defined by $\langle \psi \rangle=\overline{\text{span}}\{\psi(. \ominus h):h \in H\}$.  
In general, a closed subspace $V \subseteq L^{2}(\tilde{G})$ is called shift invariant if and only if $T_{h}V \subset V$, for all $h \in H$. 
For $\phi \in L^2(\tilde{G})$, the periodization function is defined by 
\begin{eqnarray} 
P_{\phi}(\omega)=\sum_{h \in H^{\bot}} \vert \hat{\phi}(\omega \oplus h) \vert ^{2}.
\end{eqnarray}

For $f,g \in L^{2}(\tilde{G})$, the \emph{$C$-bracket} is defined as
\begin{equation}\label{110}
[f,g](x):=\sum_{h \in H} f(x \oplus h)\overline{g(x \oplus h)}.
\end{equation}

Clearly, $P_{\phi}=[\hat{\phi},\hat{\phi}]$.

\begin{proposition}\label{psi_char}\cite{mahapatra}
Let $(T_{h}\phi)_{h \in H}$ be a Bessel sequence. Then for each $H^{\bot}$-periodic function $m \in L^{2}(U^{*})$ we have $m\hat{\phi}\in L^{2}(G^{*})$. Here $m$ is extended by $H^{\bot}$-periodicity to the function $m$ on $L^{2}(G^{*})$. Moreover, if $(T_{h}\phi)_{h \in H}$ is a frame for $\langle \phi \rangle$, then
\begin{equation} 
\langle \phi \rangle=\{f \in L^{2}(G):\hat{f}=m\hat{\phi}, \; m \in L^{2}(U^{*})\}
\end{equation}	
\end{proposition}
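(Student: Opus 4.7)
My plan is to recast both the Bessel and the frame hypothesis as pointwise size bounds on the periodization $P_\phi=[\hat\phi,\hat\phi]$, and then to handle both assertions by approximating any $H^\perp$-periodic $m\in L^2(U^*)$ by its partial Fourier sums in the orthonormal basis $\{W_\alpha^*\}_{\alpha\in\mathbb Z_+}$ of $L^2(U^*)$.

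For the first assertion, I would start from the identity
$$
\sum_{h\in H}|\langle f,T_h\phi\rangle|^2=\int_{U^*}\bigl|[\hat f,\hat\phi](\omega)\bigr|^2\,d\mu^*(\omega),
$$
which follows by Plancherel on $H$ after $H^\perp$-periodizing $\hat f\,\overline{\hat\phi}$ onto $U^*$. Testing the Bessel inequality against functions of the form $\hat f=m\hat\phi$ with $H^\perp$-periodic $m$ reduces it to $\int_{U^*}|m|^2 P_\phi(P_\phi-B)\,d\mu^*\le 0$ for every $m\in L^2(U^*)$, which forces $P_\phi\le B$ a.e.\ on $U^*$. Then the tiling $G^*=\bigsqcup_{k\in H^\perp}(U^*\oplus k)$ together with the $H^\perp$-periodicity of $m$ gives
$$
\int_{G^*}|m\hat\phi|^2\,d\mu^*=\int_{U^*}|m|^2 P_\phi\,d\mu^*\le B\,\|m\|_{L^2(U^*)}^2,
$$
which proves $m\hat\phi\in L^2(G^*)$.

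For the inclusion $\supseteq$ of the second assertion, I would expand the $H^\perp$-periodic $m$ in the basis $\{W_\alpha^*\}$, so that its partial sums $m_N=\sum_{\alpha\le N}c_\alpha\,\chi(h_{[\alpha]},\cdot)$ are trigonometric polynomials. Since $\widehat{T_{h_{[\alpha]}}\phi}$ equals $\hat\phi$ multiplied by the character $\omega\mapsto\chi(h_{[\alpha]},\omega)$, each $m_N\hat\phi$ is the Fourier transform of a finite linear combination of shifts of $\phi$, hence lies in the Fourier image of $\langle\phi\rangle$. Applying the first assertion to $m-m_N$ yields $m_N\hat\phi\to m\hat\phi$ in $L^2(G^*)$, and by closedness of $\langle\phi\rangle$ the limit belongs to $\langle\phi\rangle$.

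For the converse inclusion, I would take $f\in\langle\phi\rangle$ and approximate it in $L^2$ by finite shift combinations $f_n$, whose Fourier transforms are $m_n\hat\phi$ for $H^\perp$-periodic trigonometric polynomials $m_n$. The main obstacle is that on the zero set $Z=\{\omega\in U^*:P_\phi(\omega)=0\}$ the product $m_n\hat\phi$ is insensitive to $m_n$, so $m_n$ is not determined there. The frame hypothesis supplies the missing control: repeating the argument of the first step with the lower frame bound gives $A\le P_\phi\le B$ on $U^*\setminus Z$. Replacing each $m_n$ by $m_n\mathbf{1}_{U^*\setminus Z}$ leaves $m_n\hat\phi$ unchanged, and
$$
\|f_n-f_k\|_{L^2(G)}^2=\int_{U^*}|m_n-m_k|^2 P_\phi\,d\mu^*\ge A\int_{U^*}|m_n-m_k|^2\,d\mu^*
$$
shows $(m_n)$ is Cauchy in $L^2(U^*)$ with a limit $m$ satisfying $\hat f=m\hat\phi$. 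The only real technical point is this final step, where the lower frame bound is used to invert the weight $P_\phi$ on its support; everything else is a clean Plancherel and periodization computation.
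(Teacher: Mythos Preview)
The paper does not contain a proof of this proposition: it is stated in the preliminaries and attributed to \cite{mahapatra}, so there is no in-paper argument to compare your proposal against.

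That said, your plan is the standard one and is essentially correct, with one small circularity to be aware of. In your derivation of $P_\phi\le B$ you ``test the Bessel inequality against $\hat f=m\hat\phi$'' for $H^\perp$-periodic $m\in L^2(U^*)$; but such $f$ are only known to lie in $L^2(G)$ \emph{after} you know $m\hat\phi\in L^2(G^*)$, which is exactly what you are trying to prove. The fix is immediate: restrict the test functions to bounded $m$ (for which $m\hat\phi\in L^2(G^*)$ is automatic since $\hat\phi\in L^2$), e.g.\ $m=\mathbf 1_E$ for measurable $E\subseteq U^*$; this already forces $P_\phi\le B$ a.e., and then your periodization identity $\int_{G^*}|m\hat\phi|^2=\int_{U^*}|m|^2P_\phi$ gives the first assertion for all $m\in L^2(U^*)$. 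The remaining steps --- approximation of $m$ by partial sums in $\{W_\alpha^*\}$ for the inclusion $\supseteq$, and the Cauchy argument using the lower frame bound $A\le P_\phi$ on $U^*\setminus\mathcal N_\phi$ (cf.\ Theorem~\ref{frame seq char1}) for the inclusion $\subseteq$ --- are correct as written.
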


\begin{theorem}\label{onb}\cite{mahapatra}
For $\phi \in L^{2}(G)$, the system $(T_{h}\phi)_{h \in H}$ is a Parseval frame for $\langle\phi \rangle$ iff there exists a $H^{\bot}$-periodic set $\Omega \subseteq G^{*}$ such that $P_{\phi}=\chi_{\Omega}$ a.e. In particular, $(T_{h}\phi)_{h \in H}$ is an orthonormal basis for $\langle \phi \rangle$ iff $P_{\phi}(\omega)=1$, for a.e. $\omega \in G^{*}$.
\end{theorem}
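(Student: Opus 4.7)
The plan is to convert the Parseval frame condition on $(T_h\phi)_{h \in H}$ into an equivalent pointwise condition on $P_\phi$ via the Fourier transform and Plancherel on the fundamental domain $U^*$. Since a Parseval frame is, in particular, a frame (hence Bessel), Proposition \ref{psi_char} applies and every $f \in \langle\phi\rangle$ has the form $\hat{f} = m\hat{\phi}$ for some $H^\perp$-periodic function $m \in L^2(U^*)$.

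For such $f$, using $\widehat{T_h\phi}(\omega) = \overline{\chi(h,\omega)}\,\hat\phi(\omega)$ and tiling $G^*$ by $H^\perp$-translates of $U^*$, I would write
\begin{equation*}
\langle f, T_h\phi\rangle \;=\; \int_{G^*} m(\omega)|\hat\phi(\omega)|^2 \chi(h,\omega)\, d\mu^*(\omega) \;=\; \int_{U^*} m(\omega) P_\phi(\omega)\, \chi(h,\omega)\, d\mu^*(\omega),
\end{equation*}
the second equality following from the $H^\perp$-periodicity of $m$. Interpreting the right-hand side as the Fourier coefficients of $m P_\phi \in L^1(U^*)$ against the orthonormal basis $\{\chi(h,\cdot)|_{U^*}\}_{h \in H}$ of $L^2(U^*)$ and applying Plancherel yields $\sum_{h \in H} |\langle f, T_h\phi\rangle|^2 = \int_{U^*} |m|^2 P_\phi^2\, d\mu^*$, while a direct Plancherel computation gives $\|f\|^2 = \int_{G^*} |m\hat\phi|^2 d\mu^* = \int_{U^*} |m|^2 P_\phi\, d\mu^*$.

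The Parseval identity therefore becomes $\int_{U^*} |m|^2 (P_\phi^2 - P_\phi)\, d\mu^* = 0$ for every admissible $m$. Specializing to indicator functions of measurable subsets of $U^*$ (which are bounded and hence admissible) forces $P_\phi(P_\phi - 1) = 0$ a.e., so $P_\phi \in \{0,1\}$ a.e.; since $P_\phi$ is manifestly $H^\perp$-periodic, the set $\Omega := \{P_\phi = 1\}$ is $H^\perp$-periodic and $P_\phi = \chi_\Omega$ a.e. The converse direction is obtained by running the same chain of equalities backwards, using $\chi_\Omega^2 = \chi_\Omega$. For the orthonormal basis part, an ONB is in particular a Parseval frame with $\|\phi\|=1$, so from the first part $1 = \|\phi\|^2 = \int_{U^*} \chi_\Omega\, d\mu^* = \mu^*(U^*)$ forces $\Omega$ to have full measure in $U^*$ and hence $P_\phi = 1$ a.e.; conversely, if $P_\phi = 1$ a.e., then $(T_h\phi)$ is a Parseval frame for $\langle\phi\rangle$ and a direct computation $\langle T_h\phi, T_k\phi\rangle = \int_{U^*} P_\phi(\omega)\chi(h \ominus k,\omega)\, d\mu^*(\omega) = \delta_{h,k}$ shows orthonormality, so the system is an ONB.

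The main obstacle is ensuring that the set of admissible $m$ is rich enough to extract the pointwise identity $P_\phi^2 = P_\phi$ from the integral identity; this is exactly where the full strength of Proposition \ref{psi_char} is used, since it guarantees that every $m \in L^2(U^*)$ with $m\hat\phi \in L^2(G^*)$ actually arises from some $f \in \langle\phi\rangle$, so bounded (indicator) test functions on $U^*$ are all available to probe $P_\phi^2 - P_\phi$.
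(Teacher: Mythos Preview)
The paper does not contain a proof of this statement; Theorem~\ref{onb} is quoted from \cite{mahapatra} without proof, so there is nothing in the paper to compare your argument against.

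That said, your argument is the standard bracket/periodisation computation and is correct. One point deserves to be made explicit in the converse direction: to ``run the chain of equalities backwards'' you implicitly use that every $f\in\langle\phi\rangle$ has the form $\hat f=m\hat\phi$ with $m\in L^2(U^*)$, but Proposition~\ref{psi_char} supplies this characterisation only under the hypothesis that $(T_h\phi)_{h\in H}$ is already a frame. This is not circular, because $P_\phi=\chi_\Omega$ gives $P_\phi\le 1$ (hence Bessel) and, on $U^*\setminus\mathcal N_\phi=\Omega\cap U^*$, the bounds $1\le P_\phi\le 1$ of Theorem~\ref{frame seq char1}, so the frame (indeed tight, constant $1$) property is available before you invoke Proposition~\ref{psi_char}. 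Alternatively you can verify the Parseval identity first on finite linear combinations of the $T_h\phi$ (where $m$ is a trigonometric polynomial) and extend to all of $\langle\phi\rangle$ by continuity using the Bessel bound. With this clarification your proof is complete.
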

	
\begin{theorem}\label{frame seq char1}\cite{mahapatra}
The sequence $\{T_{h}\phi:h \in H\}$ in $L^{2}(G)$ is a frame sequence with frame bounds $C$ and $D$ iff 
\begin{equation}\label{115}
C \leq [\hat{\phi},\hat{\phi}](\omega) \leq D \text{ for a.e. $\omega$ in }U^{*}\setminus \mathcal{N}_{\phi},
\end{equation}
where $\mathcal{N}_{\phi}=\{\omega \in U^{*}:[\hat{\phi},\hat{\phi}](\omega)=0\}$.
\end{theorem}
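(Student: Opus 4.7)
\medskip

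\noindent\textbf{Proof proposal.} The plan is to pull everything back to the dual side, use the $C$-bracket to reduce integrals over $\tilde G^{*}$ to integrals over the fundamental domain $U^{*}$, and then read off the claimed pointwise bounds on $[\hat\phi,\hat\phi]$ by testing against suitable periodic multipliers.

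First, since a frame sequence is by definition a frame for its closed linear span, and that span is precisely $\langle\phi\rangle$, I would rewrite the claim as: $\{T_{h}\phi\}_{h\in H}$ is a frame for $\langle\phi\rangle$ with bounds $C,D$ iff \eqref{115} holds. In particular, any frame sequence is Bessel, so Proposition~\ref{psi_char} applies and every $f\in\langle\phi\rangle$ admits a representation $\hat f=m\hat\phi$ with some $H^{\bot}$-periodic $m\in L^{2}(U^{*})$.

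Next, I would compute the two sides of the frame inequality in terms of $m$ and $[\hat\phi,\hat\phi]$. For $\|f\|^{2}$, Plancherel together with the $H^{\bot}$-periodicity of $|m|^{2}$ gives
\begin{equation*}
\|f\|^{2}=\int_{\tilde G^{*}}|m(\omega)|^{2}|\hat\phi(\omega)|^{2}\,d\mu^{*}(\omega)=\int_{U^{*}}|m(\omega)|^{2}[\hat\phi,\hat\phi](\omega)\,d\mu^{*}(\omega).
\end{equation*}
For the middle term, $\langle f,T_{h}\phi\rangle=\int_{\tilde G^{*}}m(\omega)[\hat\phi,\hat\phi]\text{-type expression}$; more precisely, $\widehat{T_{h}\phi}(\omega)=\overline{\chi(h,\omega)}\hat\phi(\omega)$, so the inner product equals the $h$-th Fourier coefficient on $U^{*}$ of the $H^{\bot}$-periodic function $m\cdot[\hat\phi,\hat\phi]$. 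Applying Parseval in $L^{2}(U^{*})$ with respect to the orthonormal system $\{W^{*}_{\alpha}\}$ yields
\begin{equation*}
\sum_{h\in H}|\langle f,T_{h}\phi\rangle|^{2}=\int_{U^{*}}|m(\omega)|^{2}[\hat\phi,\hat\phi](\omega)^{2}\,d\mu^{*}(\omega).
\end{equation*}

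With these two identities in hand the frame condition reduces to
\begin{equation*}
C\int_{U^{*}}|m|^{2}[\hat\phi,\hat\phi]\,d\mu^{*}\le\int_{U^{*}}|m|^{2}[\hat\phi,\hat\phi]^{2}\,d\mu^{*}\le D\int_{U^{*}}|m|^{2}[\hat\phi,\hat\phi]\,d\mu^{*},
\end{equation*}
required for all $H^{\bot}$-periodic $m$ with $m\hat\phi\in L^{2}$. Multipliers supported on $\mathcal{N}_{\phi}$ produce $f=0$ and contribute nothing, so only $m$ supported on $U^{*}\setminus\mathcal{N}_{\phi}$ matter. The ``if'' direction is then immediate, since $C\le[\hat\phi,\hat\phi]\le D$ pointwise on $U^{*}\setminus\mathcal{N}_{\phi}$ implies the same inequality for the weighted integrals. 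For the ``only if'' direction I would argue by contradiction: if the set $E=\{\omega\in U^{*}\setminus\mathcal{N}_{\phi}:[\hat\phi,\hat\phi](\omega)>D\}$ had positive measure, I would take $m=\chi_{E}$ (extended $H^{\bot}$-periodically, with an additional truncation to ensure $m\hat\phi\in L^{2}$), so that $f$ lies in $\langle\phi\rangle$ and the upper frame bound fails on $E$; the same subset trick handles the lower bound $C$.

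The main obstacle I anticipate is the careful justification of the Parseval step and the choice of test multipliers on $U^{*}\setminus\mathcal{N}_{\phi}$: one must ensure that $m\hat\phi$ genuinely lies in $L^{2}(\tilde G^{*})$ (so that $f\in\langle\phi\rangle$ via Proposition~\ref{psi_char}) and that the interchange of the sum over $H$ with the integral over $\tilde G^{*}$ is legitimate. This is where the Bessel property of $(T_{h}\phi)$ and an exhaustion of $E$ by subsets of finite measure become important, and where I would spend most of the technical effort.
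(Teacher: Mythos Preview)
The paper does not give a proof of this theorem; it is quoted verbatim from \cite{mahapatra} and only \emph{used} later (e.g.\ to bound $[\hat\phi,\hat\phi]$ on $\eta(V_0)$ in the Cantor-dyadic FMRA construction). So there is nothing in the present paper to compare your argument against.

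That said, your proposal is the standard Benedetto--Li type proof and is essentially correct. Two small points worth tightening. First, you invoke Proposition~\ref{psi_char} (which needs the Bessel property) before you have established anything; for the ``if'' direction you should first note that $[\hat\phi,\hat\phi]\le D$ a.e.\ on all of $U^{*}$ (trivially on $\mathcal N_\phi$), which already gives the Bessel bound $D$ by the same Parseval computation applied to an arbitrary $f\in L^{2}(G)$, and only then use Proposition~\ref{psi_char} to parametrise $\langle\phi\rangle$. Second, in the ``only if'' direction your test multiplier $m=\chi_E$ already satisfies $m\hat\phi\in L^{2}(\tilde G^{*})$ because $\int_{U^{*}}|m|^{2}[\hat\phi,\hat\phi]\le D\,\mu^{*}(E)<\infty$ under the assumed upper frame bound, so no extra truncation is needed there; the exhaustion argument is only required if you want to avoid assuming the upper bound while disproving the lower one. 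With these adjustments the argument goes through cleanly.
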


\section{Spectrum of shift invariant spaces and FMRA}\label{6-2}

For $f \in L^2(\tilde{G})$, let $\mathbb E_{f}=\text{ supp }P_f=\{\omega \in \tilde{G}^{*}:P_{f}(\omega) \neq 0\}$. Let $\hat{f}_{\Vert\omega}=(\hat{f}(\omega \oplus h))_{h \in H^{\bot}}$. Since $P_f \in L^1(U^*)$, therefore $\hat{f}_{\Vert\omega}$ belongs to $l^2(\mathbb N_0)$, for a.e. $\omega \in U^{*}$. Let $\mathbb S \subset L^{2}(\tilde{G})$, then $\hat{\mathbb S}_{\Vert\omega}=\{\hat{f}_{\Vert\omega}:f \in \mathbb S\}$. Let $\mathbb S$ be a shift-invariant subspace of $L^{2}(\tilde{G})$, then the spectrum $\eta(\mathbb S)$ of $\mathbb S$ is defined by $\eta(\mathbb S)=\{\omega \in U^{*}:\hat{\mathbb S}_{\Vert\omega}\neq \{0\}\}$. For $f \in L^2(\tilde{G})$, let $\mathbb V_{f}=\{\omega \in U^{*}: \hat{f}_{\Vert\omega} \neq 0\}$. For $E \subseteq U^*$, let $E^{\sim}=E \oplus H^{\perp}$.

\begin{proposition}\label{606}
For $\phi \in L^2(\tilde{G})$, we have
$$
\eta(\langle \phi\rangle)=\mathbb V_{\phi}=\mathbb E_{\phi}\cap U^{*}.
$$
\end{proposition}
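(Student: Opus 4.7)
The plan is to prove the two equalities $\mathbb V_{\phi}=\mathbb E_{\phi}\cap U^{*}$ and $\eta(\langle\phi\rangle)=\mathbb V_{\phi}$ separately, and the first one will be essentially a definition chase while the second one decomposes into the two inclusions.

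For the equality $\mathbb V_{\phi}=\mathbb E_{\phi}\cap U^{*}$, I would simply unfold the definitions. Since $P_{\phi}(\omega)=\sum_{h\in H^{\perp}}|\hat{\phi}(\omega\oplus h)|^{2}$ is a sum of non-negative terms, $P_{\phi}(\omega)\neq 0$ if and only if $\hat{\phi}(\omega\oplus h)\neq 0$ for at least one $h\in H^{\perp}$, i.e., if and only if $\hat{\phi}_{\Vert\omega}\neq 0$. Combining this with the fact that $\mathbb V_{\phi}\subseteq U^{*}$ by its very definition yields $\mathbb V_{\phi}=\mathbb E_{\phi}\cap U^{*}$.

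For $\mathbb V_{\phi}\subseteq\eta(\langle\phi\rangle)$, I would just note that $\phi\in\langle\phi\rangle$, so for every $\omega\in\mathbb V_{\phi}$ the element $\hat{\phi}_{\Vert\omega}$ is a non-zero member of $\hat{\langle\phi\rangle}_{\Vert\omega}$. For the reverse inclusion, the key observation is that for any finite linear combination $f=\sum_{k}c_{k}T_{h_{k}}\phi$ one has $\hat{f}(\omega)=m_{f}(\omega)\hat{\phi}(\omega)$ with $m_{f}$ an $H^{\perp}$-periodic trigonometric polynomial, so $\hat{f}_{\Vert\omega}=m_{f}(\omega)\hat{\phi}_{\Vert\omega}$. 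In particular, if $\omega\in U^{*}\setminus\mathbb V_{\phi}$ then $\hat{f}$ vanishes on the entire coset $\omega\oplus H^{\perp}$. Letting $Z=U^{*}\setminus\mathbb V_{\phi}$ and $Z^{\sim}=Z\oplus H^{\perp}$, this shows $\hat{f}\equiv 0$ a.e.\ on $Z^{\sim}$ for every $f$ in the span.

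To pass to the closure, I would use that if $f\in\langle\phi\rangle$ and $f_{n}\to f$ in $L^{2}(\tilde{G})$ with each $f_{n}$ in the span, then $\hat{f}_{n}\to\hat{f}$ in $L^{2}(\tilde{G}^{*})$; since each $\hat{f}_{n}$ vanishes a.e.\ on $Z^{\sim}$, so does $\hat{f}$. Translating this Fourier statement back into the fiber language, $\hat{f}_{\Vert\omega}=0$ for almost every $\omega\in Z$, and therefore almost every $\omega\in Z$ lies outside $\eta(\langle\phi\rangle)$. This gives $\eta(\langle\phi\rangle)\subseteq\mathbb V_{\phi}$ modulo a null set, which is the level of precision at which the spectrum is defined.

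I expect no serious obstacle, but the one point that needs care is the closure step: the pointwise non-vanishing condition defining $\eta$ must be reconciled with $L^{2}$-convergence, which I handle by working with the $H^{\perp}$-periodic set $Z^{\sim}$ on which $\hat{f}$ vanishes almost everywhere, rather than pointwise. Apart from that, the argument is a direct unfolding of the definitions together with the standard fact that elements of $\langle\phi\rangle$ have Fourier transforms of the form $m\hat{\phi}$ for $H^{\perp}$-periodic multipliers $m$ (here used only on the dense span, which suffices).
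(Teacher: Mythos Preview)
Your proposal is correct and follows essentially the same path as the paper's proof: both arguments hinge on the multiplier representation $\hat{f}=m\hat{\phi}$, from which $\hat{f}_{\Vert\omega}=0$ whenever $\hat{\phi}_{\Vert\omega}=0$, together with the trivial inclusion $\mathbb V_{\phi}\subseteq\eta(\langle\phi\rangle)$ and the definition-chase $\mathbb V_{\phi}=\mathbb E_{\phi}\cap U^{*}$.

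The one genuine difference is in how the inclusion $\eta(\langle\phi\rangle)\subseteq\mathbb V_{\phi}$ is obtained. The paper invokes the multiplier description $\hat{f}=m\hat{\phi}$ for \emph{every} $f\in\langle\phi\rangle$ (appealing to a characterization of $\langle\phi\rangle$ that, as stated in the paper, assumes $(T_{h}\phi)$ is a Bessel sequence or a frame), and then reads off the conclusion pointwise. You instead use the multiplier form only on the algebraic span, where it is immediate, and then pass to the $L^{2}$-closure, obtaining the inclusion modulo a null set. Your route is slightly longer but is self-contained for arbitrary $\phi\in L^{2}(\tilde{G})$, without any implicit Bessel hypothesis; the paper's route is shorter but leans on a description of $\langle\phi\rangle$ whose hypotheses are not part of the proposition. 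One small point worth making explicit in your write-up is the quantifier swap at the end: you show that for each $f\in\langle\phi\rangle$ one has $\hat{f}_{\Vert\omega}=0$ for a.e.\ $\omega\in Z$, and to conclude that a.e.\ $\omega\in Z$ lies outside $\eta(\langle\phi\rangle)$ you should pass through a countable dense subset of $\langle\phi\rangle$ (or equivalently note that the fiber spaces $\hat{\langle\phi\rangle}_{\Vert\omega}$ are determined, up to a null set of $\omega$, by any countable generating family).
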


\begin{proof}
For $f \in \langle \phi \rangle$, we have $\hat{f}(\omega)=m(\omega)\hat{\phi}(\omega)$, for some $m \in M_{\phi}$. Then $\hat{f}_{\Vert\omega}=(m(\omega \oplus h)\hat{\phi}(\omega \oplus h))_{h \in H^{\bot}}$.

Let $\omega \in \eta(\langle \phi \rangle)$, then there exists $f \in \langle \phi \rangle$ such that
$
\hat{f}_{\Vert\omega} \neq 0.
$
Thus, $\hat{\phi}_{\Vert\omega} \neq 0$ implies that $\omega \in \mathbb V_{\phi}$. Therefore, $\eta(\langle \phi \rangle)\subseteq \mathbb V_{\phi}$.

Further, if $\omega \in  \mathbb V_{\phi}$, then $\hat{\phi}_{\Vert\omega}\neq 0$, and hence $\omega \in \eta(\langle \phi \rangle)$. Thus, $\mathbb V_{\phi} \subseteq \eta(\langle \phi \rangle)$.

For $\omega \in \mathbb V_{\phi}$, we have $\hat{\phi}_{\Vert\omega} \neq 0$ and hence $P_{\phi}(\omega)>0$. Thus, $\mathbb V_{\phi} \subseteq \mathbb E_{\phi} \cap U^*$. Similarly, $\mathbb E_{\phi} \cap U^{*} \subseteq  \mathbb V_{\phi}$. Thus, we have $\eta(\langle \phi \rangle)=\mathbb V_{\phi}=\mathbb E_{\phi} \cap U^{*}$.
\end{proof}

\begin{corollary}\label{607}
For the principal shift invariant space $\langle \phi \rangle$, the following are equivalent:
\begin{enumerate}
\item[(i)] $\{T_{h}\phi: h \in H\}$ is a Parseval frame for $\langle \phi \rangle$.
\item[(ii)] $\sum_{h \in H^{\bot}}|\hat{\phi}(\omega \oplus h)|^{2}=\mathbf{1}_{\mathbb E_{\phi}}(\omega)$ a.e.
\item[(iii)] $\sum_{h \in H^{\bot}}|\hat{\phi}(\omega \oplus h)|^{2}=\mathbf{1}_{\eta(\langle \phi \rangle)\oplus H^{\bot}}(\omega)$ a.e.
\end{enumerate}
\end{corollary}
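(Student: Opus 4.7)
The plan is to observe that conditions (ii) and (iii) are essentially restatements of one another, and then derive the equivalence of (i) and (ii) as a direct consequence of Theorem \ref{onb} together with Proposition \ref{606}. The argument is essentially a bookkeeping exercise in the periodization machinery, with no substantial obstacle beyond carefully identifying supports.

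First I would handle (ii) $\Leftrightarrow$ (iii). The periodization function $P_{\phi}(\omega)=\sum_{h\in H^{\perp}}|\hat{\phi}(\omega \oplus h)|^{2}$ is $H^{\perp}$-periodic by construction, so its support $\mathbb{E}_{\phi}$ is an $H^{\perp}$-periodic subset of $\tilde{G}^{*}$. Since $U^{*}$ is a fundamental domain for the action of $H^{\perp}$, this gives $\mathbb{E}_{\phi}=(\mathbb{E}_{\phi}\cap U^{*})\oplus H^{\perp}$. Applying Proposition \ref{606}, we obtain $\mathbb{E}_{\phi}=\eta(\langle \phi \rangle)\oplus H^{\perp}$, which shows $\mathbf{1}_{\mathbb{E}_{\phi}}=\mathbf{1}_{\eta(\langle \phi \rangle)\oplus H^{\perp}}$ a.e., hence (ii) and (iii) are the same assertion.

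Next I would prove (i) $\Leftrightarrow$ (ii). For the forward direction, suppose $\{T_{h}\phi:h\in H\}$ is a Parseval frame for $\langle \phi \rangle$. By Theorem \ref{onb}, there exists an $H^{\perp}$-periodic set $\Omega \subseteq \tilde{G}^{*}$ with $P_{\phi}=\chi_{\Omega}$ a.e. Since $P_{\phi}=\chi_{\Omega}$ takes only the values $0$ and $1$, we have $\Omega=\{\omega\in\tilde{G}^{*}:P_{\phi}(\omega)\neq 0\}=\mathbb{E}_{\phi}$ up to a null set. Therefore $P_{\phi}=\mathbf{1}_{\mathbb{E}_{\phi}}$ a.e., which is exactly (ii). For the converse, if (ii) holds, then $P_{\phi}=\mathbf{1}_{\mathbb{E}_{\phi}}$ a.e., and since $\mathbb{E}_{\phi}$ is $H^{\perp}$-periodic, this is the form demanded by Theorem \ref{onb} (with $\Omega=\mathbb{E}_{\phi}$), so $\{T_{h}\phi:h\in H\}$ is a Parseval frame for $\langle \phi \rangle$.

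The only delicate point to state carefully is that the set $\Omega$ produced by Theorem \ref{onb} must coincide a.e.\ with the support $\mathbb{E}_{\phi}$; but this is immediate from the fact that $P_{\phi}=\chi_{\Omega}$ is $\{0,1\}$-valued. Everything else is routine.
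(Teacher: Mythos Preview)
Your proof is correct and follows exactly the route the paper intends: the corollary is stated without proof immediately after Proposition~\ref{606}, signalling that it is meant to be read off from Theorem~\ref{onb} together with the identification $\mathbb{E}_{\phi}\cap U^{*}=\eta(\langle\phi\rangle)$, which is precisely what you do. Your observation that $\mathbb{E}_{\phi}$ is $H^{\perp}$-periodic and hence equals $\eta(\langle\phi\rangle)\oplus H^{\perp}$ is the only step the reader has to supply, and you handle it cleanly.
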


\begin{corollary}\label{608}
Suppose that the principal shift invariant spaces generated by $\phi$ and $\psi$ are equal i.e $\langle \phi \rangle=\langle \psi \rangle$. Then, we have $\mathbb E_{\phi}=\mathbb E_{\psi}$.
\end{corollary}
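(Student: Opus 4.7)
The plan is to reduce the claim directly to Proposition \ref{606} and then extend the equality from a fundamental domain $U^{*}$ to all of $\tilde{G}^{*}$ using the natural periodicity of the sets involved.

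First I would record the structural observation that $\mathbb{E}_{\phi}$ is $H^{\perp}$-periodic. This is immediate from the defining formula $P_{\phi}(\omega)=\sum_{h\in H^{\bot}}|\hat{\phi}(\omega\oplus h)|^{2}$: a shift of $\omega$ by any element of $H^{\perp}$ just permutes the summation index, so $P_{\phi}(\omega\oplus h')=P_{\phi}(\omega)$ for every $h'\in H^{\perp}$, and hence $\mathbb{E}_{\phi}=\text{supp }P_{\phi}$ satisfies $\mathbb{E}_{\phi}=(\mathbb{E}_{\phi}\cap U^{*})^{\sim}=(\mathbb{E}_{\phi}\cap U^{*})\oplus H^{\perp}$ up to sets of measure zero. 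The same holds for $\psi$.

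Next, by Proposition \ref{606} applied to each generator,
\begin{equation*}
\mathbb{E}_{\phi}\cap U^{*}=\eta(\langle \phi\rangle)\quad\text{and}\quad \mathbb{E}_{\psi}\cap U^{*}=\eta(\langle \psi\rangle).
\end{equation*}
The spectrum $\eta(\mathbb{S})$ depends only on the subspace $\mathbb{S}$, so the hypothesis $\langle \phi\rangle=\langle \psi\rangle$ gives $\eta(\langle \phi\rangle)=\eta(\langle \psi\rangle)$, and therefore $\mathbb{E}_{\phi}\cap U^{*}=\mathbb{E}_{\psi}\cap U^{*}$.

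Combining these two steps,
\begin{equation*}
\mathbb{E}_{\phi}=(\mathbb{E}_{\phi}\cap U^{*})\oplus H^{\perp}=(\mathbb{E}_{\psi}\cap U^{*})\oplus H^{\perp}=\mathbb{E}_{\psi},
\end{equation*}
which is what we wanted. There is no real obstacle here: the only point that deserves care is the $H^{\perp}$-periodicity of $\mathbb{E}_{\phi}$, which justifies lifting the equality on the fundamental domain $U^{*}$ to an equality of subsets of $\tilde{G}^{*}$.
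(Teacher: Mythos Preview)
Your proof is correct and follows exactly the route the paper intends: the corollary is stated in the paper immediately after Proposition~\ref{606} without any proof, so the implicit argument is precisely the one you give---identify $\mathbb{E}_{\phi}\cap U^{*}$ with the spectrum via Proposition~\ref{606}, use that the spectrum depends only on the space, and then extend by the $H^{\perp}$-periodicity of $P_{\phi}$. There is nothing to add.
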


The dilation operator on $L^2(\tilde{G})$ is given by,
\begin{equation*}
\mathbb Df(x)=p^{\frac{1}{2}}f(Ax), \quad f \in L^{2}(\tilde{G}).
\end{equation*}
Note that
\begin{equation*}
\widehat{\mathbb D^{j}f}(\omega)=p^{-j/2}\hat{f}(B^{-j}\omega), \text{ for every } j \in \mathbb{Z}
\end{equation*}

We have already given the definition of MRA in $L^2(\tilde{G})$. As given in \cite{behera}, we have following characterization of scaling function of an MRA in $L^2(\tilde{G})$ for a prime $p$.

\begin{theorem}\label{scaling function mra}
A function $\phi \in L^{2}(\tilde{G})$ is a scaling function for an MRA of $L^{2}(\tilde{G})$ if and only if
\begin{enumerate}
\item[(i)] $\sum_{h \in H^{\perp}}|\hat{\phi}(\omega \oplus h)|^2=1$, for a.e. $\omega \in \tilde{G}^{*}$,
\item[(ii)] $lim_{j \rightarrow \infty}|\hat{\phi}(B^{-j}\omega)|=1$, for a.e. $\omega \in \tilde{G}^{*}$,
\item[(iii)] there exists $H^{\bot}$-periodic function $m_{0}$ in $L^{2}\left(U^{*}\right)$ such that  $\hat{\phi}(B\omega)=m_{0}(\omega)\hat{\phi}(\omega)$, for a.e. $\omega \in \tilde{G}^{*}$.
\end{enumerate}
\end{theorem}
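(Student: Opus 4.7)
The plan is to prove both directions by translating the defining conditions of an MRA (nestedness, density, trivial intersection, orthonormal shift basis of $V_0$, and dilation covariance) into Fourier-side statements on the scaling function, and then matching them against (i)--(iii).

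For the forward direction, assume $\phi$ generates an MRA $\{V_j\}_{j\in\mathbb{Z}}$ with $V_j = \mathbb{D}^j V_0$ and $V_0 = \langle \phi \rangle$. Since $\{T_h\phi : h \in H\}$ is an orthonormal basis of $V_0$, Theorem \ref{onb} gives $P_\phi(\omega) = 1$ a.e., which is exactly (i). For (iii), I would use $V_0 \subset V_1$: the system $\{\mathbb{D}\, T_h\phi : h \in H\}$ is an orthonormal basis of $V_1$, so $\phi = \sum_{h \in H} c_h\, \mathbb{D}\, T_h\phi$ with $(c_h) \in \ell^2(H)$. Taking Fourier transforms and using $\widehat{\mathbb{D}\, T_h\phi}(\omega) = p^{-1/2}\chi(h, B^{-1}\omega)\hat\phi(B^{-1}\omega)$ yields $\hat\phi(\omega) = m_0(B^{-1}\omega)\hat\phi(B^{-1}\omega)$, where $m_0(\omega) = p^{-1/2}\sum_h c_h \chi(h,\omega)$ is $H^\perp$-periodic and lies in $L^2(U^*)$; this is (iii) after replacing $\omega$ by $B\omega$.

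The main obstacle is (ii), which encodes the density $\overline{\bigcup_j V_j} = L^2(\tilde G)$. My plan is to test against $f\in L^2(\tilde G)$ with $\hat f = \chi_K$ for a bounded set $K\subset \tilde G^*$. The orthogonal projection onto $V_{-j}$ has Fourier-side norm
\begin{equation*}
\|P_{V_{-j}} f\|^2 = \int_{\tilde G^*} |\hat f(\omega)|^2 \,|\hat\phi(B^{-j}\omega)|^2\,d\mu^*(\omega),
\end{equation*}
obtained by expanding in the ONB $\{\mathbb{D}^{-j} T_h\phi\}_{h\in H}$ and using (i). The density of $\bigcup V_j$ forces $\|P_{V_{-j}} f\| \to \|f\|$ as $j \to -\infty$ for every such $f$, and by dominated convergence (bounded by $1$ from (i)) together with a test-set argument this is equivalent to $|\hat\phi(B^{j}\omega)| \to 1$ a.e., i.e.~condition (ii) after re-indexing $j \mapsto -j$.

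For the converse, set $V_0 = \langle \phi \rangle$ and $V_j = \mathbb{D}^j V_0$. Condition (i) and Theorem \ref{onb} give the orthonormal basis property for $V_0$. Condition (iii) implies $\hat\phi(\omega) = m_0(B^{-1}\omega)\hat\phi(B^{-1}\omega)$, so expanding the $H^\perp$-periodic function $m_0$ in the Walsh basis $\{W^*_\alpha\}$ of $L^2(U^*)$ and inverting the Fourier transform exhibits $\phi \in V_1$; applying $\mathbb{D}^j$ yields $V_j \subset V_{j+1}$. The density $\overline{\bigcup V_j} = L^2(\tilde G)$ follows by reversing the projection computation above, now using (ii). For the trivial-intersection part $\bigcap V_j = \{0\}$, I would run the standard Lebesgue-point argument: for $f\in \bigcap V_j$, one shows $\|P_{V_j}f\|^2 = \int |\hat f|^2 |\hat\phi(B^j\omega)|^2 d\mu^*$ and observes that $|\hat\phi(B^j\omega)| \to 0$ a.e.\ as $j \to -\infty$ (a consequence of (i) and the periodization bound on the complement of a set of positive measure near infinity, handled via the fact that $\hat\phi \in L^2$). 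The delicate point in the entire proof is thus the two-sided asymptotic control of $|\hat\phi(B^j\omega)|$ as $j \to \pm\infty$ using only (i)--(iii); this is exactly where the ultrametric structure of $\tilde G^*$ and the $B$-invariance of $U^*$ make the Vilenkin argument cleaner than its $\mathbb{R}^d$ analogue.
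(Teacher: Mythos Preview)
The paper does not prove this theorem at all: it is quoted verbatim as a known result from \cite{behera} (Behera--Jahan), so there is no in-paper argument to compare your proposal against. Your outline is essentially the standard Hern\'andez--Weiss proof transported to the Vilenkin setting, which is indeed what \cite{behera} does for local fields; so in spirit you are reproducing the cited proof rather than diverging from it.

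One genuine gap worth flagging in your sketch of the forward direction for (ii): from the projection identity you obtain only
\[
\int_{U^*} |\hat\phi(B^{-j}\omega)|^2\,d\mu^*(\omega)\longrightarrow 1,
\]
and convergence of the integrals together with the bound $|\hat\phi|\le 1$ does \emph{not} by itself yield a.e.\ convergence of $|\hat\phi(B^{-j}\omega)|$ to $1$ (a test-set argument gives only convergence in measure, hence a.e.\ along a subsequence). What closes the gap is monotonicity: once (iii) is in hand, iterating $\hat\phi(\omega)=m_0(B^{-1}\omega)\hat\phi(B^{-1}\omega)$ and using $|m_0|\le 1$ a.e.\ (which follows from (i) and (iii) via the Smith--Barnwell identity $\sum_{\zeta=0}^{p-1}|m_0(\omega\oplus 0.\zeta)|^2=1$) shows that $j\mapsto |\hat\phi(B^{-j}\omega)|$ is nondecreasing. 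Then the pointwise limit exists a.e., dominated convergence applies, and the limit function, being in $[0,1]$ with integral $1$ over $U^*$, must equal $1$ a.e. The paper uses exactly this monotonicity step in its proof of the analogous FMRA statement (Theorem~\ref{scaling function fmra}); you should invoke it here as well. A similar caution applies to your trivial-intersection step: the claim $|\hat\phi(B^{j}\omega)|\to 0$ a.e.\ as $j\to -\infty$ is not a consequence of (i) alone, and the clean route is the usual argument that $\bigcap_j V_j=\{0\}$ follows automatically from having an orthonormal shift basis at each scale together with $\hat\phi\in L^2(\tilde G^*)$.
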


Frame multiresolution analysis (FMRA) for $L^2(\tilde{G})$ is defined as follows. The only difference between MRA and FMRA is in the last condition corresponding to scaling function.

\begin{definition}
A sequence $\lbrace V_j:j \in \mathbb Z \rbrace$ of closed subspaces of $L^2(\tilde{G})$ is said to be a frame multiresolution analysis, if
\begin{enumerate}
\item[(i)] $V_j \subseteq V_{j+1}$, for each $j \in \mathbb Z$;
\item[(ii)] $f \in V_j$ iff $f(A\cdot) \in V_{j+1}$, for each $j \in \mathbb Z$;
\item[(iii)] $\overline{\cup_{j \in \mathbb Z} V_j}=L^2(\tilde{G})$;
\item[(iv)] $\cap_{j \in \mathbb Z} V_j=\lbrace 0 \rbrace$;
\item[(v)] there exists a function $\phi \in V_0$, called scaling function of the FMRA, such that $\lbrace T_h\phi:h \in H \rbrace$ is  a frame for $V_0$.
\end{enumerate}
\end{definition}

If $\lbrace T_h\phi:h \in H \rbrace$ forms a Parseval frame for $V_0$, then $\lbrace V_j:j \in \mathbb Z \rbrace$ is called a Parseval FMRA.

Let $\phi \in L^2(\tilde{G})$ be a scaling function for the Parseval FMRA $\lbrace V_j:j \in \mathbb Z \rbrace$. For $j \in \mathbb Z$, suppose that $f \in V_j=\overline{\text{span}}\lbrace\mathbb D^jT_h \phi: h \in H\rbrace$.  From Proposition \ref{psi_char}, for each $j \in \mathbb Z$, we have
$$
V_j=\lbrace f \in L^2(\tilde{G}):\hat{f}(B^j\omega)=m(\omega)\hat{\phi}(\omega) \text{ a.e. for some } m \in L^2(U^*) \rbrace.
$$

For each $f \in V_j$, there exists the minimal filter $m_f=[\hat{f}(B^j\cdot), \hat{\phi}] \in L^2(U^*)$ such that $\hat{f}(B^j\omega)=m_f(\omega)\hat{\phi}(\omega)$ a.e.. 

Since $\phi \in V_0 \subseteq V_1$, we have $\hat{\phi}(B\omega)=m_{\phi}(\omega)\hat{\phi}(\omega)$ a.e. Then the function $m_{\phi}$ is called the \textit{minimal low-pass filter}.

\begin{lemma}
Let $\phi \in L^2(\tilde{G})$. Let $f_i \in \langle \phi \rangle$, $i=1,2$, and $m_i \in L^2(U^*)$ be such that $\hat{f}_i(B\omega)=m_i(\omega)\hat{\phi}(\omega)$ a.e. Then
$$
[\hat{f}_1, \hat{f}_2](B\omega)=\sum_{\zeta=0}^{p-1} m_1(\omega \oplus 0.\zeta)\overline{m_2(\omega \oplus 0.\zeta)}P_{\phi}(\omega \oplus 0.\zeta), \text{ a.e.},
$$
where $m_1$ and $m_2$ are extended $H^{\perp}$-periodically to $\tilde{G}^*$
\end{lemma}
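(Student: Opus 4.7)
The plan is to expand the bracket directly from its definition on the Fourier side, apply the refinement relation, and then repackage the resulting sum into $p$ coset-indexed pieces.

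First I would write
\begin{equation*}
[\hat{f}_1,\hat{f}_2](B\omega)=\sum_{h\in H^{\perp}}\hat{f}_1(B\omega\oplus h)\,\overline{\hat{f}_2(B\omega\oplus h)},
\end{equation*}
interpreting the bracket on $\tilde{G}^{*}$ as the $H^{\perp}$-periodization (consistent with $P_{\phi}=[\hat{\phi},\hat{\phi}]$). For each $h\in H^{\perp}$ I would set $\tau=\omega\oplus B^{-1}h$ so that $B\tau=B\omega\oplus h$, and then apply the refinement hypothesis $\hat{f}_i(B\tau)=m_i(\tau)\hat{\phi}(\tau)$ to convert the summand to $m_1(\omega\oplus B^{-1}h)\overline{m_2(\omega\oplus B^{-1}h)}|\hat{\phi}(\omega\oplus B^{-1}h)|^{2}$. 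As $h$ runs through $H^{\perp}$, the element $B^{-1}h$ runs through $B^{-1}H^{\perp}$, so the sum is over $B^{-1}H^{\perp}$.

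The key structural step is the coset decomposition. Since $B$ acts on $\tilde{G}^{*}$ as the shift analogue of $A$, the group $B^{-1}H^{\perp}$ is the enlargement of $H^{\perp}$ obtained by permitting the coordinate in position $1$ to be nonzero, so it is partitioned into $p$ cosets
\begin{equation*}
B^{-1}H^{\perp}=\bigsqcup_{\zeta=0}^{p-1}\bigl(0.\zeta\oplus H^{\perp}\bigr),
\end{equation*}
where $0.\zeta$ denotes the element with entry $\zeta$ in position $1$ and zeros elsewhere. Splitting the sum accordingly and using the $H^{\perp}$-periodicity of the extended $m_1,m_2$ to pull them out of the inner sum, I obtain
\begin{equation*}
[\hat{f}_1,\hat{f}_2](B\omega)=\sum_{\zeta=0}^{p-1}m_1(\omega\oplus 0.\zeta)\,\overline{m_2(\omega\oplus 0.\zeta)}\sum_{h'\in H^{\perp}}|\hat{\phi}(\omega\oplus 0.\zeta\oplus h')|^{2},
\end{equation*}
and the inner sum is exactly $P_{\phi}(\omega\oplus 0.\zeta)$.

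The only real subtlety is justifying the coset decomposition of $B^{-1}H^{\perp}$ and identifying the representatives with the symbols $0.\zeta$ used in the statement; everything else is formal manipulation. A secondary issue is convergence/legitimacy of interchanging the two sums, but since all terms are nonnegative once absolute values are inserted (the product $m_1\overline{m_2}$ having a dominating $|m_1\overline{m_2}|$), Tonelli applies and the rearrangement is valid almost everywhere.
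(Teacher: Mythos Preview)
The paper actually states this lemma without proof, so there is no argument in the paper to compare against. Your proposal is correct and is the standard route: expand the bracket at $B\omega$, use that $B$ is an automorphism to write $B\omega\oplus h=B(\omega\oplus B^{-1}h)$, apply the relation $\hat{f}_i(B\cdot)=m_i\hat{\phi}$, then split the resulting sum over $B^{-1}H^{\perp}$ into the $p$ cosets $0.\zeta\oplus H^{\perp}$ and use the $H^{\perp}$-periodicity of $m_1,m_2$ to factor them out, leaving $P_{\phi}(\omega\oplus 0.\zeta)$ as the inner sum. Your identification of the coset representatives $0.\zeta$ (the element with entry $\zeta$ in position $1$) matches the paper's notation, and the rearrangement is justified a.e.\ since $\sum_{h}|\hat{f}_1(B\omega\oplus h)\overline{\hat{f}_2(B\omega\oplus h)}|\leq\tfrac{1}{2}\bigl(P_{f_1}(B\omega)+P_{f_2}(B\omega)\bigr)<\infty$ for a.e.\ $\omega$.
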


\begin{proposition}\label{low pass fmra}
Let $\lbrace V_j: j \in \mathbb Z \rbrace$ be a Parseval FMRA with scaling function $\phi$ and minimal low-pass filter $m_{\phi}$. Then
\begin{equation}\label{low pass fmra filter}
\sum_{\zeta=0}^{p-1} |m_{\phi}(\omega \oplus 0.\zeta)|^2=\mathbf{1}_{\mathbb E_{\phi}}(B\omega), \text{ for a.e. } \omega.
\end{equation}
\end{proposition}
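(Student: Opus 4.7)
The plan is to specialize the preceding lemma to $f_1=f_2=\phi$ with $m_1=m_2=m_\phi$. The two-scale relation $\hat\phi(B\omega)=m_\phi(\omega)\hat\phi(\omega)$ holds because $\phi\in V_0\subset V_1$, so the lemma's hypotheses are satisfied and it yields
\begin{equation*}
P_\phi(B\omega)=[\hat\phi,\hat\phi](B\omega)=\sum_{\zeta=0}^{p-1}|m_\phi(\omega\oplus 0.\zeta)|^2\, P_\phi(\omega\oplus 0.\zeta)\quad\text{a.e.}
\end{equation*}
This is the single identity that will be manipulated; the remaining work is to convert both $P_\phi$ occurrences into the indicator of $\mathbb E_\phi$.

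Next I would invoke the Parseval frame hypothesis. Since $\{T_h\phi:h\in H\}$ is a Parseval frame for $V_0=\langle\phi\rangle$, Corollary \ref{607} gives $P_\phi=\mathbf{1}_{\mathbb E_\phi}$ a.e. Substituting this on the left produces exactly $\mathbf{1}_{\mathbb E_\phi}(B\omega)$, and substituting on the right turns the displayed identity into
\begin{equation*}
\mathbf{1}_{\mathbb E_\phi}(B\omega)=\sum_{\zeta=0}^{p-1}|m_\phi(\omega\oplus 0.\zeta)|^2\,\mathbf{1}_{\mathbb E_\phi}(\omega\oplus 0.\zeta)\quad\text{a.e.}
\end{equation*}

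The final step is to remove the indicator on the right-hand side, and this is the place where the adjective \emph{minimal} carries the weight. By the very definition of the minimal filter recalled just before the statement, $m_\phi$ vanishes off $\mathbb E_\phi$: on the complement of $\mathbb E_\phi$ one has $\hat\phi=0$, so the identity $\hat\phi(B\cdot)=m_\phi\hat\phi$ leaves $m_\phi$ unconstrained there, and the minimum $L^2(U^*)$-norm extension is the zero extension. Consequently $|m_\phi(\omega\oplus 0.\zeta)|^2\,\mathbf{1}_{\mathbb E_\phi}(\omega\oplus 0.\zeta)=|m_\phi(\omega\oplus 0.\zeta)|^2$ a.e. for each coset representative $0.\zeta$, and \eqref{low pass fmra filter} follows.

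The only genuine obstacle is the last paragraph: articulating the support property $\operatorname{supp} m_\phi\subseteq\mathbb E_\phi$ cleanly from minimality. Once this is granted (it is essentially the content of how $m_f$ is constructed as a quotient by $P_\phi$ on $\mathbb E_\phi$ and extended by zero elsewhere), the rest is a direct application of the lemma combined with Corollary \ref{607}.
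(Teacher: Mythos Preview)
Your proof is correct and follows essentially the same route as the paper: specialize the lemma with $f_1=f_2=\phi$, $m_1=m_2=m_\phi$, then use the Parseval frame condition $P_\phi=\mathbf{1}_{\mathbb E_\phi}$ together with the support property of the minimal filter. One minor sharpening: the support condition $\operatorname{supp} m_\phi\subseteq\mathbb E_\phi$ does not require an abstract ``minimum $L^2$-norm extension'' argument, since the paper defines $m_\phi$ explicitly as the bracket $m_\phi=[\hat\phi(B\cdot),\hat\phi]$, which vanishes wherever $\hat\phi_{\Vert\omega}=0$, i.e.\ off $\mathbb E_\phi$.
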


\begin{proof}
From the previous lemma, for $f_1=f_2=\phi$ and $m_1=m_2=m_{\phi}$, we have
$$
[\hat{\phi}, \hat{\phi}](B\omega)=\sum_{\zeta=0}^{p-1}\vert m_{\phi}(\omega \oplus 0.\zeta) \vert^2P_{\phi}(\omega \oplus 0.\zeta), \text{ a.e.}
$$
Then the result follows from the definition of minimal low pass filter, and the fact that $\lbrace T_h\phi:h \in H \rbrace$ is a Parseval frame for $V_0$.
\end{proof}

\begin{theorem}\label{scaling function fmra}
If $\phi \in L^{2}(\tilde{G})$ is a scaling function for a Parseval FMRA $\lbrace V_j:j \in \mathbb Z \rbrace$, then
\begin{enumerate}
\item[(i)] $\sum_{h \in H^{\perp}}|\hat{\phi}(\omega \oplus h)|^2=\mathbf{1}_{\mathbb E_{\phi}}(\omega)$, for a.e. $\omega \in \tilde{G}^{*}$,
\item[(ii)] there exists $H^{\bot}$-periodic function $m \in L^{2}\left(U^{*}\right)$ such that $\hat{\phi}(B\omega)=m(\omega)\hat{\phi}(\omega)$, for a.e. $\omega \in \tilde{G}^{*}$,
\item[(iii)] $lim_{j \rightarrow \infty}|\hat{\phi}(B^{-j}\omega)|=1$, for a.e $\omega \in \tilde{G}^{*}$.
\end{enumerate}
\end{theorem}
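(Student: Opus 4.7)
Parts (i) and (ii) fall out of the infrastructure already in place. For (i), since $\{T_h\phi : h \in H\}$ is a Parseval frame for $V_0 = \langle \phi \rangle$, the equivalence of (i) and (ii) in Corollary~\ref{607} immediately yields $\sum_{h \in H^\perp}|\hat\phi(\omega\oplus h)|^2 = \mathbf{1}_{\mathbb{E}_\phi}(\omega)$ for a.e.\ $\omega$. For (ii), I would combine the inclusion $\phi \in V_0 \subseteq V_1$ with the description
$$
V_1 = \{f \in L^2(\tilde G): \hat f(B\omega) = m(\omega)\hat\phi(\omega) \text{ a.e. for some } m \in L^2(U^*)\}
$$
derived from Proposition~\ref{psi_char}; applying this description to $f = \phi$ extracts the $H^\perp$-periodic refinement filter required, which is precisely the minimal low-pass filter $m_\phi$ discussed earlier.

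The substantive content is (iii). The plan is to exploit the density axiom $\overline{\bigcup_{j \in \mathbb Z} V_j} = L^2(\tilde G)$, together with the fact that $\{\mathbb D^j T_h \phi : h \in H\}$ remains a Parseval frame for $V_j$ because Parseval frames transport along the unitary dilation $\mathbb D^j$. For $f \in L^2(\tilde G)$, write $\Pi_j$ for the orthogonal projection onto $V_j$; by the Parseval property and density,
$$
\|\Pi_j f\|^2 = \sum_{h \in H}|\langle f, \mathbb D^j T_h \phi \rangle|^2 \longrightarrow \|f\|^2 \quad \text{as } j \to \infty.
$$
Next I would rewrite each inner product in the frequency domain via $\widehat{\mathbb D^j T_h \phi}(\omega) = p^{-j/2}\overline{\chi(h,B^{-j}\omega)}\hat\phi(B^{-j}\omega)$ and collapse the sum over $h \in H$ through a Poisson-type periodization, producing an expression of the form
$$
\|\Pi_j f\|^2 = \int_{\tilde G^*} |\hat f(\omega)|^2\,|\hat\phi(B^{-j}\omega)|^2\, d\mu^*(\omega),
$$
valid once $\hat f$ is supported in a set small enough (relative to the scale $B^{-j}$) that the periodization reduces to a single term.

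The main obstacle is promoting this integrated identity to a pointwise a.e.\ statement. My plan is to let $\hat f = \chi_K$ with $K$ running through a shrinking family of neighborhoods inside a fundamental domain around a prescribed point $\omega_0$, and apply the Lebesgue differentiation theorem to deduce $|\hat\phi(B^{-j}\omega_0)|^2 \to 1$ at almost every $\omega_0$. The subtle point is reconciling this limit with (i), which only guarantees the periodization equals $\mathbf{1}_{\mathbb E_\phi}$ rather than the constant $1$: one must verify that for almost every $\omega$ the trajectory $B^{-j}\omega$ eventually enters every neighborhood of the origin of $\tilde G^*$ and therefore lies in $\mathbb E_\phi$, after which the refinement identity from (ii) combined with a dominated convergence argument pins the limit at exactly $1$.
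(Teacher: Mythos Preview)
Your treatment of (i) and (ii) is fine and matches the paper. For (iii), your setup also coincides with the paper's: test against the Parseval system $\{\mathbb D^j T_h\phi\}$, use density to get $\|\Pi_j f\|^2 \to \|f\|^2$, and unwind on the frequency side to an integral of $|\hat\phi(B^{-j}\cdot)|^2$ against $|\hat f|^2$. The divergence is in the passage from this integrated convergence to the pointwise statement.

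Your Lebesgue-differentiation plan has a genuine gap: from $\int_K |\hat\phi(B^{-j}\omega)|^2\,d\omega \to |K|$ as $j\to\infty$ for each fixed $K$, you cannot conclude $|\hat\phi(B^{-j}\omega_0)|^2 \to 1$ a.e.\ by shrinking $K$, because that requires interchanging the limit in $j$ with the limit in $K$, and nothing you have written supplies the needed uniformity. Your closing remark about $B^{-j}\omega$ eventually lying in $\mathbb E_\phi$ does not rescue this either: membership in $\mathbb E_\phi$ only tells you that the \emph{full periodization} $P_\phi(B^{-j}\omega)=\sum_h|\hat\phi(B^{-j}\omega\oplus h)|^2$ equals $1$, not that the single term $|\hat\phi(B^{-j}\omega)|^2$ does.

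The paper sidesteps the double limit by exploiting monotonicity. It uses the single test function $\hat f=\mathbf 1_{U^*}$ to obtain $\int_{U^*}|\hat\phi(B^{-j}\omega)|^2\,d\omega \to 1$. Then Proposition~\ref{low pass fmra} gives $|m_\phi|\le 1$ a.e., and iterating the refinement relation yields $\hat\phi(\omega)=\bigl(\prod_{k=1}^{N}m_\phi(B^{-k}\omega)\bigr)\hat\phi(B^{-N}\omega)$, so $|\hat\phi(B^{-j}\omega)|$ is \emph{increasing} in $j$. Combined with the bound $|\hat\phi|\le 1$ from (i), the pointwise limit $F(\omega)=\lim_{j}|\hat\phi(B^{-j}\omega)|^2$ exists with $0\le F\le 1$; dominated convergence then gives $\int_{U^*}F=1$, which forces $F=1$ a.e. The monotonicity coming from $|m_\phi|\le 1$ is the key idea missing from your outline.
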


\begin{proof}
Statements (i) and (ii) follow trivially. To prove (iii), let $P_j$ be the orthogonal projection of $L^2(\tilde{G})$ onto $V_j$, for $j \in \mathbb Z$. Since $\overline{\cup_{j \in \mathbb Z}V_j}=L^2(\tilde{G})$, we have $\Vert P_jf \Vert \rightarrow \Vert f \Vert$, for every $f \in L^2(\tilde{G})$ as $j \rightarrow \infty$. Let $\hat{f}=\mathbf 1_{U^*}$. Since $\lbrace T_h\phi:h \in H \rbrace$ is a Parseval frame for $V_0$, therefore $\lbrace\mathbb D^j T_h \phi : h \in H\rbrace$ is a Parseval frame for $V_j$, for each $j \in \mathbb Z$. Thus, for large enough positive integer $j$
\begin{align*}
\Vert P_jf \Vert^2 & = \sum_{h \in H}\vert \langle \hat{f},\hat{\phi}_{j,h} \rangle \vert^2\\
                   & = p^j \sum_{h \in H} \vert \int_{U^*} \mathbf 1_{B^{-j}U^*}(\omega)\overline{\hat{\phi}(\omega)} \chi(h,\omega) \; d\omega \vert^2 \\
                   & = p^j \int_{U^*} \vert \mathbf{1}_{B^{-j}U^*}(\omega)\overline{\hat{\phi}(\omega)} \vert^2 \; d\omega \\              
                   & = \int_{U^*} \vert \hat{\phi}(B^{-j}\omega) \vert^2 \; d\omega.
\end{align*}
Since $\Vert P_jf \Vert^2 \rightarrow 1$, as $j \rightarrow \infty$, therefore
\begin{equation}\label{projection}
\lim_{j \rightarrow \infty}\int_{U^*} \vert \hat{\phi}(B^{-j}(\omega)) \vert^2 \; d\omega =1.
\end{equation}
Now, consider the minimal filter $m_{\phi}$ for $\phi$, then
\begin{align*}
\hat{\phi}(\omega)& = (\Pi_{j=1}^{N}m_{\phi}(B^{-j}\omega))\hat{\phi}(B^{-N}\omega), \text{ for every } N\in \mathbb N.
\end{align*}
From Equation \ref{low pass fmra filter}, $\vert m_{\phi}(\omega) \vert \leq 1$ a.e. and hence $(\vert \hat{\phi}(B^{-j}(\omega)) \vert)$ is an increasing sequence as $j \rightarrow \infty$, a.e. Let $F(\omega)=\lim_{j \rightarrow\infty} \vert \hat{\phi}(B^{-j}\omega) \vert^2$. Further, from Theorem \ref{onb}, we get that $\vert \hat{\phi}(\omega) \vert \leq 1$ for a.e. $\omega$. From the dominated convergence theorem, and Equation \ref{projection} we get that $\int_{U^*} F(\omega) \; d\omega=1$. As $0 \leq F(\omega) \leq 1$, $\lim_{j \rightarrow \infty}\vert \hat{\phi}(B^{-j}(\omega)) \vert=1$ a.e.
\end{proof}

\begin{theorem}
For a prime $p$, suppose that $\lbrace V_j: j \in \mathbb Z \rbrace$ is a Parseval FMRA. Then there exists an MRA $\lbrace \mathcal V_j: j \in \mathbb Z \rbrace$ such that $V_j \subseteq \mathcal V_j$, for each $j \in \mathbb Z$.
\end{theorem}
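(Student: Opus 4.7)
The strategy is to construct an MRA scaling function $\varphi$ whose principal shift-invariant space contains $\phi$, and then take $\mathcal V_j := \mathbb D^j\langle\varphi\rangle$; by dilation this immediately gives $V_j \subseteq \mathcal V_j$ for every $j$. By Theorem \ref{scaling function fmra} and Proposition \ref{low pass fmra}, the given Parseval FMRA scaling function $\phi$ comes equipped with a minimal low-pass filter $m_\phi$ that is $H^\perp$-periodic, satisfies $\hat\phi(B\omega) = m_\phi(\omega)\hat\phi(\omega)$, and obeys the partial QMF identity $\sum_{\zeta=0}^{p-1} |m_\phi(\omega\oplus 0.\zeta)|^2 = \mathbf 1_{\mathbb E_\phi}(B\omega)$. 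The idea is to complete $m_\phi$ to a genuine QMF filter, and then to synthesize $\varphi$ from it by the standard infinite product.

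First I extend $m_\phi$ to an $H^\perp$-periodic filter $m$ with $\sum_\zeta |m(\omega\oplus 0.\zeta)|^2 = 1$ a.e. The sets $\{\omega: B\omega \in \mathbb E_\phi\}$ and its complement are both $H^\perp$-periodic (since $\mathbb E_\phi$ is and $B$ respects the relevant coset structure), so I put $m := m_\phi$ on the first set, and on the second set, where $m_\phi$ vanishes identically on each $\{0.\zeta\}$-coset, I fill the missing unit mass by a measurable $H^\perp$-periodic selection, e.g.\ $m(\omega) := 1$ on the unique representative with $\omega_1 = 0$ and $m := 0$ elsewhere in that coset (note $\omega_1$ is invariant under $H^\perp$-shifts). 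Define $\hat\varphi(\omega) := \prod_{j=1}^{\infty} m(B^{-j}\omega)$; since $|m| \le 1$ pointwise, the product converges a.e., and an iterated application of the QMF identity gives $\varphi \in L^2(\tilde G)$.

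The MRA axioms for $\{\mathcal V_j\}$ now follow from Theorem \ref{scaling function mra} applied to $\varphi$. Condition (i) $P_\varphi \equiv 1$ follows by telescoping the full QMF identity for $m$; condition (ii), the refinement $\hat\varphi(B\omega) = m(\omega)\hat\varphi(\omega)$, is immediate from the product form; and condition (iii), $\lim_j |\hat\varphi(B^{-j}\omega)| = 1$ a.e., follows because the tail product $\prod_{k>j} m(B^{-k}\omega)$ converges to $1$ as the number of factors shrinks. The verification that $V_j \subseteq \mathcal V_j$ then reduces to showing $\phi \in \mathcal V_0 = \langle\varphi\rangle$, i.e.\ producing $H^\perp$-periodic $m_0 \in L^2(U^*)$ with $\hat\phi = m_0\hat\varphi$. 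The natural candidate is $m_0 := \mathbf 1_{\mathbb E_\phi}$, which is already $H^\perp$-periodic and bounded; on $\mathbb E_\phi^c$ both $\hat\phi$ and $m_0\hat\varphi$ vanish, and on $\mathbb E_\phi$ the equality $\hat\varphi = \hat\phi$ is obtained by comparing the infinite products factor by factor.

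The main obstacle is precisely this last comparison. The extension $m$ coincides with $m_\phi$ only on $\{B\omega \in \mathbb E_\phi\}$, so to conclude $\hat\varphi(\omega) = \hat\phi(\omega)$ for $\omega \in \mathbb E_\phi$ one must argue that the backward dilation orbit $\{B^{-j}\omega\}_{j\ge 1}$ lies (up to a null set) inside the set where $m$ and $m_\phi$ agree, or else that $\hat\phi(\omega) = 0$ there anyway. This is where the dilation dynamics of $\tilde G^*$ are essential: for a.e.\ $\omega$ the iterates $B^{-j}\omega$ eventually enter $U^*$ and, by Theorem \ref{scaling function fmra}(iii), eventually land in the essential support of $\hat\phi$ (hence in $\mathbb E_\phi \cap U^* = E$), so the QMF-forced discrepancy $m \ne m_\phi$ only occurs at steps where the partial product $\hat\phi(B^{-j}\omega)$ itself vanishes. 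Carrying out this bookkeeping rigorously, and thereby identifying $\hat\varphi|_{\mathbb E_\phi}$ with $\hat\phi$, is the crux of the proof.
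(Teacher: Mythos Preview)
Your filter-completion strategy is natural but genuinely different from the paper's, and it carries two gaps that are not merely bookkeeping.

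First, the claim that $P_\varphi\equiv 1$ ``follows by telescoping the full QMF identity'' is not correct. A bounded measurable $m$ satisfying $\sum_\zeta|m(\omega\oplus 0.\zeta)|^2=1$ does \emph{not} force the infinite product to have $P_\varphi\equiv 1$; classical counterexamples (stretched Haar, nontrivial cycles) exist, and nothing in your setup excludes them. You do have extra information, namely an actual $\phi$ with $|\hat\phi(B^{-j}\omega)|\to 1$, and this can be leveraged to show $|\hat\varphi|=|\hat\phi|$ on $\operatorname{supp}\hat\phi$; but that only gives $P_\varphi\ge P_\phi$ on $\eta(V_0)$, not $P_\varphi\equiv 1$ on all of $U^*$. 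Off $\eta(V_0)$ you have no control coming from $\phi$, and the telescoping argument by itself yields only $\int_{U^*}P_\varphi\le 1$, not the pointwise bound.

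Second, and more basic, the infinite product may not converge at all. You only know $|m|\le 1$, so the moduli of the partial products decrease, but the complex partial products need not converge: from $\hat\phi(\omega)=\bigl(\prod_{j=1}^N m_\phi(B^{-j}\omega)\bigr)\hat\phi(B^{-N}\omega)$ and Theorem~\ref{scaling function fmra}(iii) you get $|\hat\phi(B^{-N}\omega)|\to 1$, not $\hat\phi(B^{-N}\omega)\to 1$, so the phase of $\prod_{j\le N}m_\phi(B^{-j}\omega)$ can wander. This same phase issue obstructs the step $\hat\phi=\mathbf 1_{\mathbb E_\phi}\hat\varphi$: even where both are nonzero you only get equality of moduli, and there is no reason the ratio $\hat\phi(\omega\oplus h)/\hat\varphi(\omega\oplus h)$ is constant in $h\in H^\perp$, which is exactly what $\hat\phi_{\Vert\omega}\in\operatorname{span}\{\hat\varphi_{\Vert\omega}\}$ requires.

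The paper sidesteps all of this by working fiberwise rather than through a filter product. For each $\omega\in U^*$ it picks the least $j(\omega)\ge 0$ with $\hat\phi_{\Vert B^{-j(\omega)}\omega}\ne 0$ (which exists a.e.\ by Theorem~\ref{scaling function fmra}(iii)) and \emph{transplants} that unit-norm fiber to $\omega$ via $\hat\varphi(\omega\oplus B^{j(\omega)}h):=\hat\phi(B^{-j(\omega)}\omega\oplus h)$, setting the remaining coordinates to zero. This makes $\|\hat\varphi_{\Vert\omega}\|=1$ and $\hat\phi_{\Vert\omega}=\mathbf 1_{\eta(V_0)}(\omega)\,\hat\varphi_{\Vert\omega}$ hold by construction, so $P_\varphi\equiv 1$ and $V_0\subseteq\mathcal V_0$ are automatic; the remaining work is a case-by-case verification of the refinement equation for $\varphi$. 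Your infinite-product route could perhaps be salvaged by first fixing phases so that $\hat\phi(B^{-N}\omega)\to 1$ (not just in modulus) and then proving a Cohen-type condition for the extended $m$, but that is substantially more than bookkeeping.
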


\begin{proof}
Assume that $\lbrace V_j: j \in \mathbb Z \rbrace$ is a Parseval FMRA with scaling function $\phi$. Note that $\hat{\phi}_{\Vert \omega}=0$, for $\omega \notin \eta(V_0)$. Let
$$
E_0=\lbrace \omega \in U^*:\hat{\phi}_{\Vert \omega}\neq 0 \rbrace,  E_j=\lbrace \omega \in U^*: \hat{\phi}_{\Vert B^{-j}\omega} \neq 0, \; \hat{\phi}_{\Vert B^{-m}\omega} = 0, \text{ for } 0 \leq m <j\rbrace.
$$
By Theorem \ref{scaling function fmra} (iii), $\lim_{j \rightarrow \infty}\vert \hat{\phi}(B^{-j}\omega) \vert=1$, for a.e. $\omega \in \tilde{G}^*$. Thus, $U^*=\cup_{j \geq 0}E_j$. Therefore, for each $\omega \in U^*$, there exists a unique $j(\omega) \in \mathbb N_0$ such that $\omega \in E_{j(\omega)}$. On $U^*$, let the function $\hat{\varphi}$ be defined by
$$
\hat{\varphi}(\omega \oplus B^{j(\omega)}h)=\hat{\phi}(B^{-j(\omega)}\omega \oplus h), \text{ for } h \in H^{\perp}, \text{ and } \hat{\varphi}(\omega \oplus h)=0, \text{ if } h \notin B^{j(\omega)}H^{\perp}.
$$
Then
$
\Vert \hat{\varphi}_{\Vert \omega} \Vert^2=1,
$
for a.e. $\omega \in U^*$. Therefore, $\varphi \in L^2(\tilde{G})$ and $\lbrace T_h \varphi : h \in H\rbrace$ is an orthonormal basis for $\mathcal V_0$, where $\mathcal V_j=\overline{\text{span}}\lbrace \mathbb D^jT_h\varphi : h \in H \rbrace$, for $j \in \mathbb Z$. For $\omega \in \eta(V_0)$, we have $\hat{\phi}_{\Vert \omega}=\hat{\varphi}_{\Vert \omega}$. Thus
\begin{equation}\label{phi and varphi}
\hat{\phi}_{\Vert \omega}=\mathbf{1}_{(\eta(V_0))^{\sim}}(\omega)\hat{\varphi}_{\Vert \omega}.
\end{equation}
Then we have $V_0 \subseteq \mathcal V_0$, since span$\lbrace \hat{\phi}_{\Vert \omega} \rbrace \subseteq $ span$\lbrace \hat{\varphi}_{\Vert \omega} \rbrace$ (from \eqref{phi and varphi}). Since $V_j=\mathbb D^j(V_0)$ and $\mathcal V_j=\mathbb D^j(\mathcal V_0)$, we have $V_j \subseteq \mathcal V_j$, for each $j \in \mathbb Z$. To show that $\lbrace \mathcal V_j:j \in \mathbb Z \rbrace$ is an MRA with a scaling function $\varphi$, we only need to check that $\varphi$ satisfies conditions (ii) and (iii) of Theorem \ref{scaling function mra}. From \eqref{phi and varphi}, we have
$$
\vert \hat{\phi}(\omega) \vert \leq \vert \hat{\varphi}(\omega) \vert, \text{ for a.e } \omega \in \tilde{G}^*.
$$
From (iii) of Theorem \ref{scaling function fmra}, we have
$$
1 \geq \vert \hat{\varphi}(B^{-j}\omega) \vert \geq \vert \hat{\phi}(B^{-j}\omega) \vert \rightarrow 1, \text{ as } j \rightarrow \infty, \text{ for a.e. } \omega \in \tilde{G}^*.
$$
Thus, $\lim_{j \rightarrow \infty}\vert \hat{\varphi}(B^{-j}\omega) \vert=1$, for a.e. $\omega \in \tilde{G}^*$. Now, we have to find $H^{\perp}$-periodic function $m_{\varphi} \in L^2(U^*)$ such that $\hat{\varphi}(B\omega)=m_{\varphi}(\omega)\hat{\varphi}(\omega)$, for a.e. $\omega \in \tilde{G}^*$, which is equivalent to $(\hat{\varphi}(B\omega \oplus Bh))_{h \in H^{\perp}}=m_{\varphi}(\omega)\hat{\varphi}_{\Vert\omega}$, for a.e. $\omega \in U^*$. Let $m$ be a low-pass filter for $\phi$ such that
$$
(\hat{\phi}(B \omega \oplus Bh))_{h \in H^{\perp}}=m(\omega)\hat{\phi}_{\Vert \omega}, \text{ for a.e. } \omega \in U^*.
$$
Let the $H^{\perp}$-periodic function $m_{\varphi}$ be defined by
\begin{equation*}
m_{\varphi}(\omega) =
\begin{cases}
m(\omega), & \text{if $\omega \in \eta(V_0) \cap B^{-1}((\eta(V_0))^{\sim})$}\\
1, & \parbox[t]{8.0cm}{if $\omega \in (\eta(V_0) \cap B^{-1}((U^* \setminus \eta(V_0))^{\sim}) \cap B^{-1}U^*)\cup (((U^*\setminus \eta(V_0)) \setminus B^{-1}((\eta(V_0))^{\sim})) \cap B^{-1}U^*)$}\\
0, & \text{otherwise}.
\end{cases}
\end{equation*}
Note that $U^*$ is the disjoint union of $\eta(V_0) \cap B^{-1}((\eta(V_0))^{\sim})$, $\eta(V_0) \cap B^{-1}((U^*\setminus \eta(V_0))^{\sim})$, $(U^* \setminus \eta(V_0)) \cap B^{-1}((\eta(V_0))^{\sim})$ and $(U^*\setminus\eta(V_0)) \cap B^{-1}((U^*\setminus \eta(V_0))^{\sim})$.

Suppose that $\omega \in \eta(V_0)$ and $B\omega \in \eta(V_0) \oplus H^{\perp}$. Now, as $\omega \in \eta(V_0)$, then $\hat{\phi}(\omega \oplus h)=\hat{\varphi}(\omega \oplus h)$, for every $h \in H^{\perp}$. Since $B\omega \in \eta(V_0)\oplus H^{\perp}$, therefore either $B\omega \in \eta(V_0)$, or $B\omega \ominus h' \in \eta(V_0)$, for some non-zero $h' \in H^{\perp}$. Let $B\omega \in \eta(V_0)$, then 
$$
\hat{\varphi}(B\omega\oplus Bh)=\hat{\phi}(B\omega \oplus Bh)=m_{\varphi}(\omega)\hat{\varphi}(\omega \oplus h),
$$
for every $h \in H^{\perp}$. Let $B\omega \notin \eta(V_0)$, then $B\omega \ominus h'\in \eta(V_0)$, for some non-zero $h' \in H^{\perp}$.
\begin{align*}
\hat{\varphi}(B\omega \oplus Bh)&=\hat{\phi}(B\omega \ominus h' \oplus Bh \oplus h')\\
&=m(\omega)\hat{\phi}(\omega \oplus h)\\
&=m_{\varphi}(\omega)\hat{\varphi}(\omega \oplus h), \text{ for every } h \in H^{\perp}.
\end{align*}

Next, suppose that $\omega \in \eta(V_0)$ and $B\omega \in (U^* \setminus \eta(V_0)) \oplus H^{\perp}$. Then $B\omega \notin \eta(V_0) \oplus H^{\perp}$ and $\hat{\phi}(\omega \oplus h)=\hat{\varphi}(\omega \oplus h)$, for every $h \in H^{\perp}$. If $B\omega \in U^*$, then $B\omega \in U^* \setminus \eta(V_0)$. From the definition of $\varphi$, for $h \in H^{\perp}$
$$
\hat{\varphi}(B \omega \oplus Bh)=\hat{\phi}(\omega \oplus h)=\hat{\varphi}(\omega \oplus h).
$$
Thus, for $\omega \in \eta(V_0) \cap B^{-1}((U^* \setminus \eta(V_0))^{\sim}) \cap B^{-1}U^*$, $\hat{\varphi}(B\omega \oplus Bh)=m_{\varphi}(\omega)\hat{\varphi}(\omega \oplus h)$. If $B\omega \notin U^*$, then $B\omega=\omega'\oplus \zeta.0$. Thus, $B\omega \ominus \zeta.0=\omega' \in U^*\setminus \eta(V_0)$. Hence, $\hat{\varphi}(B\omega \oplus Bh)=0$, since $Bh \oplus \zeta.0 \notin B^{j(B\omega \ominus \zeta.0)}H^{\perp}$. Thus, $\hat{\varphi}(B\omega \oplus Bh)=m_{\varphi}(\omega)\hat{\varphi}(\omega \oplus h)$.

Further, suppose that  $\omega \in U^*\setminus \eta(V_0)$ and $B\omega \in \eta(V_0) \oplus H^{\perp}$. Then $\hat{\varphi}(B\omega \oplus Bh)=\hat{\phi}(B\omega \oplus Bh)=m(\omega)\hat{\phi}(\omega \oplus h)=0$, for every $h \in H^{\perp}$ as $\omega \in U^* \setminus \eta(V_0)$. Next, suppose that $B\omega \ominus h' \in \eta(V_0)$, for some non-zero $h' \in H^{\perp}$. Then,
\begin{align*}
\hat{\varphi}(B\omega \oplus Bh)& = \hat{\phi}(B\omega \oplus Bh)\\
                                &=m(\omega)\hat{\phi}(\omega \oplus h) =0, \text{ for every } h \in H^{\perp}, \text{ as } \omega \in U^*\setminus \eta(V_0).
\end{align*}
Thus, $m_{\varphi}(\omega)\hat{\varphi}(\omega \oplus h)=0$, for $\omega \in (U^* \setminus \eta(V_0)) \cap B^{-1}((\eta(V_0))^{\sim})$.

For $\omega \in (U^* \setminus \eta(V_0)) \cap B^{-1}((U^* \setminus \eta(V_0))^{\sim}) \cap B^{-1}U^*$, similar to above we have $\hat{\varphi}(B\omega \oplus Bh)=m_{\varphi}(\omega)\hat{\varphi}(\omega \oplus h)$, for every $h \in H^{\perp}$ with $m_{\varphi}(\omega)=1$. Now, if $\omega \in (U^* \setminus \eta(V_0)) \cap B^{-1}((U^* \setminus \eta(V_0))^{\sim})$ and $B\omega \notin U^*$, then $B\omega = \omega' \oplus \zeta.0$. Thus, $B\omega \ominus \zeta.0=\omega' \in U^* \setminus \eta(V_0)$. Hence $\hat{\varphi}(B\omega \oplus Bh)=0$. Thus, for $\omega \in (U^* \setminus \eta(V_0)) \cap B^{-1}((U^* \setminus \eta(V_0))^{\sim})$ and $\omega \notin B^{-1}(U^*)$, with $m_{\varphi}(\omega)=0$ we get $\hat{\varphi}(B\omega \oplus Bh)=m_{\varphi}(\omega)\hat{\varphi}(\omega \oplus h)$.
\end{proof}

As another application of spectrum of shift invariant spaces, necessary and sufficient conditions are given for the existence of frame wavelet $\psi$ associated with an FMRA $\lbrace V_j:j \in \mathbb Z \rbrace$ with scaling function $\phi$ for the Cantor dyadic group $G$. In case of $G$, the dilation operator $\mathbb D: L^2(G) \rightarrow L^2(G)$ is defined as
$$
\mathbb D(f)(x)=2^{1/2}f(\rho x), \text{ for } f \in L^2(G), \; x \in G.
$$
The definition of FMRA for $L^2(G)$ is same as that for $L^2(\tilde{G})$. Notations and definitions related to shift invariant spaces for Cantor dyadic group are same as given in the beginning of Section \ref{6-2}. In case of $G$, translation by $n \in \Lambda$ is given as $T_nf(x)=f(x-n)$.

Let $\lbrace V_j:j \in \mathbb Z \rbrace$ be an FMRA in $L^2(G)$ with scaling function $\phi$. Let $\lbrace T_n\phi:n \in \Lambda \rbrace$ be the frame for $V_0=\overline{\text{span}}\{T_{n}\phi:n \in \Lambda\}$ with lower frame bound $\mathcal C$, and upper frame bound $\mathcal D$. 

Now, $V_{1}=\mathbb DV_{0}=\overline{\text{span}}\{\mathbb DT_{n}\phi:n \in \Lambda\}$. 
$$
T_{n}\mathbb D\phi(x)=2^{1/2}\phi(\rho x -\rho n) \text{ and }
T_{l}\mathbb DT_{1.0}\phi(x)=2^{1/2}\phi(\rho x -\rho l-1.0).
$$
Thus, $V_{0}=\langle \phi \rangle$ and $V_{1}=\langle \mathbb D\phi, \mathbb DT_{1.0}\phi \rangle=\overline{\text{span}}\{T_{n}\mathbb D\phi, T_{l}\mathbb DT_{1.0}\phi :n,l \in \Lambda\}$. Further,
$\{T_{n}\mathbb D\phi, T_{l}\mathbb DT_{1.0}\phi :n,l \in \Lambda\}$, or $\{\mathbb DT_{n}\phi:n \in \Lambda\}$ is a frame for $V_{1}$ with the same frame bounds $\mathcal C$ and $\mathcal D$.

$\phi \in V_0 \subseteq V_{1}$ implies that $\phi(x)=\sum_{n \in \Lambda}d_{n}\phi(\rho x+n)$, or $\hat{\phi}(\rho \omega)=m(\omega)\hat{\phi}(\omega)$,  where $m \in L^2(D)$ is $\Lambda$-periodic. Note that the scalars $d_n$ are not unique, and hence so is $m$. Now, $\hat{V_{1}}_{\Vert\omega}=\text{span}\{\widehat{(\mathbb D\phi)}_{\Vert\omega},\widehat{(\mathbb DT_{1.0}\phi)}_{\Vert\omega}\}$, where
$$
\widehat{(\mathbb D\phi)}_{\Vert\omega}=(2^{-1/2}\hat{\phi}(\sigma(\omega+n)))_{n \in \Lambda},
$$
and
$$
\widehat{(\mathbb DT_{1.0}\phi)}_{||\omega}=(2^{-1/2}(-1)^{\omega_{0}+n_0}\hat{\phi}(\sigma(\omega+n)))_{n \in \Lambda}.
$$
Thus, $\hat{V_{1}}_{||\omega}=\text{span}\{(\hat{\phi}(\sigma(\omega+n)))_{n \in \Lambda}, ((-1)^{\omega_{0}}\hat{\phi}(\sigma(\omega+n)))_{n \in \Lambda}\}$, and \\
$\hat{V_{0}}_{||\omega}=\text{span}\{(\hat{\phi}(\omega+n))_{n \in \Lambda}\}$.

For $\omega \in D$, let $a^{\omega}=(a^{\omega}(n))_{n \in \Lambda}=(\hat{\phi}(\sigma(\omega+n)))_{n \in \Lambda}$.

Let
$$
a_{e}^{\omega}(n)=\left\{\begin{matrix}
a^{\omega}(n),&  \text{ if } n \in \Lambda_1\\
0, & \text{ if } n \in \Lambda \setminus \Lambda_1
\end{matrix}\right.,
$$
and
$$
a_{o}^{\omega}(n)=\left\{\begin{matrix}
 0,& \text{ if } n \in \Lambda_1\\
a^{\omega}(n), & \text{ if } n \in \Lambda \setminus \Lambda_1
\end{matrix}\right..
$$

Then, $a^{\omega}=a_{e}^{\omega}+a_{o}^{\omega}$ and $\langle a_{e}^{\omega}, a_{o}^{\omega}\rangle_{l^{2}(\mathbb{N}_{0})}=0.$
Let $b^{\omega}=(-1)^{\omega_{0}}a_{e}^{\omega}-(-1)^{\omega_{0}}a_{o}^{\omega}$. Therefore,
$$
\hat{V_{1}}_{||\omega}=\text{span}\{a^{\omega},b^{\omega}\}=\text{span}\{a_{e}^{\omega},a_{o}^{\omega}\}.
$$

Thus,
\begin{align*}
\eta(V_{1})&=\{\omega \in D: \text{dim} \hat{V_{1}}_{||\omega} \neq 0\}\\
&=\{\omega \in D: \sum_{n \in \Lambda_{1}}|\hat{\phi}(\sigma(\omega+n))|^{2} \neq 0; \text{ or } \sum_{n \in \Lambda \setminus \Lambda_{1}}|\hat{\phi}(\sigma(\omega+n))|^{2} \neq 0\}.
\end{align*}

Let $c^{\omega}=(c^{\omega}(n))_{n \in \Lambda}=(m(\sigma(\omega+n)) \hat{\phi}(\sigma(\omega+n)))_{n \in \Lambda}$, and $m_{e}^{\omega}=m(\sigma \omega)$, $m_{o}^{\omega}= m(\sigma \omega+0.1)$. Since $m$ is $\Lambda$-periodic, therefore
$$
m(\sigma\omega + \sigma n)=\left\{\begin{matrix}
m(\sigma\omega),& \text{ if } n \in \Lambda_1\\
m(\sigma\omega+0.1), & \text{ if } n \in \Lambda \setminus \Lambda_1
\end{matrix}\right..
$$

Now, $\hat{V_{0}}_{||\omega}=\text{span}\{c^{\omega}\}=\text{span}\{m_{e}^{\omega}a_{e}^{\omega}+m_{o}^{\omega}a_{o}^{\omega}\}$. Let
\begin{align*}
\Delta_{2}&=\{\omega \in D: \text{dim} \hat{V_{1}}_{||\omega}=2\}\\
&=\{\omega \in D: \sum_{n \in \Lambda_{1}}|\hat{\phi}(\sigma(\omega+n))|^{2} \neq 0 \text{ and } \sum_{n \in \Lambda \setminus \Lambda_{1}}|\hat{\phi}(\sigma(\omega+n))|^{2} \neq 0\},
\end{align*}
and
\begin{align*}
\Delta_{1}&=\{\omega \in D: \text{dim} \hat{V_{1}}_{||\omega} =1\} \\
&=\{\omega \in D: \text{ one of }\sum_{n \in \Lambda_{1}}|\hat{\phi}(\sigma(\omega+n))|^{2} \text{ and } \\
&\hspace{2.75cm}\sum_{n \in \Lambda \setminus \Lambda_{1}}|\hat{\phi}(\sigma(\omega+n))|^{2} \text{ is zero and the other is non-zero }\}.
\end{align*}

Now, $\eta(V_{0}) \subset \eta(V_{1})= \Delta_{2} \cup \Delta_{1}$, where $\Delta_{2}$ and $\Delta_{1}$ are disjoint sets. Let $\hat{V_1}_{\Vert \omega}=\hat{V_0}_{\Vert \omega} \oplus \hat{W_0}_{\Vert \omega}$. Suppose that $\omega \in \Delta_{2}$. If $m_{e}^{\omega}=m_{o}^{\omega}=0$, then $\hat{V_{0}}_{||\omega}=\{0\}$ and hence $\hat{W_{0}}_{||\omega}$ is two dimensional. Note that if $W_{0}$ is a principal shift invariant space, then $\hat{W_{0}}_{||\omega}$ must be one, or zero dimensional, depending on whether $\omega \in \eta(W_{0})$, or not. This gives us the following result.

\begin{lemma}
Let $E=\{\omega \in \Delta_{2}:m(\sigma\omega)=m(\sigma\omega+0.1)=0\}$. If $|E|>0$, then $W_{0}$ is not a principal shift invariant space. i.e. there does not exist $\psi \in W_{0}$ such that $W_{0}=\langle \psi \rangle$.
\end{lemma}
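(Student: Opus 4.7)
The plan is to argue by contradiction. Suppose $W_0 = \langle \psi \rangle$ for some $\psi \in L^2(G)$. The key observation is that for a principal shift invariant space, the fiber $\hat{W_0}_{\Vert\omega}$ must be at most one-dimensional for a.e.\ $\omega$, whereas for $\omega \in E$ we will show that $\hat{W_0}_{\Vert\omega}$ is two-dimensional. If $|E|>0$, this is a contradiction.

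First I would establish the fiber dimension bound. Since $\lbrace T_h\psi:h \in H\rbrace$ is a frame for $\langle \psi \rangle$, Proposition \ref{psi_char} gives
$$
\langle \psi \rangle = \lbrace f \in L^2(G):\hat{f}=m\hat{\psi},\; m \in L^2(D) \text{ is $\Lambda$-periodic}\rbrace.
$$
For any such $f$ and any $n \in \Lambda$, $\hat{f}(\omega+n) = m(\omega+n)\hat{\psi}(\omega+n) = m(\omega)\hat{\psi}(\omega+n)$, so
$$
\hat{f}_{\Vert\omega} = m(\omega)\,\hat{\psi}_{\Vert\omega}.
$$
Taking the closure in $\ell^2$ of the fibers (using that the map $f \mapsto \hat{f}_{\Vert\omega}$ is essentially a pointwise evaluation) shows $\hat{\langle\psi\rangle}_{\Vert\omega} \subseteq \mathrm{span}\lbrace \hat{\psi}_{\Vert\omega}\rbrace$, hence $\dim \hat{W_0}_{\Vert\omega} \leq 1$ for a.e.\ $\omega \in D$.

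Next I would compute $\dim \hat{W_0}_{\Vert\omega}$ for $\omega \in E$. By hypothesis $\omega \in \Delta_2$, so $\dim \hat{V_1}_{\Vert\omega} = 2$. On the other hand, the representation $\hat{V_0}_{\Vert\omega} = \mathrm{span}\lbrace m_e^{\omega}a_e^{\omega} + m_o^{\omega}a_o^{\omega}\rbrace$ together with $m_e^{\omega} = m(\sigma\omega) = 0$ and $m_o^{\omega} = m(\sigma\omega+0.1) = 0$ forces $\hat{V_0}_{\Vert\omega} = \lbrace 0 \rbrace$. Since $\hat{V_1}_{\Vert\omega} = \hat{V_0}_{\Vert\omega} \oplus \hat{W_0}_{\Vert\omega}$, we conclude $\dim \hat{W_0}_{\Vert\omega} = 2$ for every $\omega \in E$.

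If $|E|>0$, then on a set of positive measure we have $\dim \hat{W_0}_{\Vert\omega} = 2$, contradicting the fiber bound $\dim \hat{W_0}_{\Vert\omega} \leq 1$ derived from $W_0 = \langle \psi \rangle$. The main technical point is justifying that the passage to the closed span $\langle\psi\rangle$ does not enlarge the fiber beyond $\mathrm{span}\lbrace \hat{\psi}_{\Vert\omega}\rbrace$; this is the content of Proposition \ref{psi_char}, which already describes every element of $\langle\psi\rangle$ in the form $\hat{f} = m\hat{\psi}$, so no additional limiting argument is needed. Everything else is a direct unpacking of the definitions of $\Delta_2$, $E$, and the decomposition $\hat{V_1}_{\Vert\omega} = \hat{V_0}_{\Vert\omega} \oplus \hat{W_0}_{\Vert\omega}$.
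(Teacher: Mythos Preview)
Your argument is exactly the one the paper uses: the discussion immediately preceding the lemma observes that for $\omega\in E\subset\Delta_2$ one has $\hat{V_0}_{\Vert\omega}=\{0\}$, hence $\dim\hat{W_0}_{\Vert\omega}=2$, while a principal shift invariant space has fibers of dimension at most one; the lemma is then stated as an immediate consequence.

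One small point: you justify the fiber bound by invoking Proposition~\ref{psi_char}, writing ``Since $\{T_h\psi:h\in H\}$ is a frame for $\langle\psi\rangle$''. That hypothesis is not automatic for an arbitrary generator $\psi$ with $W_0=\langle\psi\rangle$; Proposition~\ref{psi_char} needs the frame (or at least Bessel) condition for the description $\hat f=m\hat\psi$ to cover all of $\langle\psi\rangle$. The paper sidesteps this by simply asserting the standard fact that $\hat{\langle\psi\rangle}_{\Vert\omega}=\mathrm{span}\{\hat\psi_{\Vert\omega}\}$ for a.e.\ $\omega$. If you want to route through Proposition~\ref{psi_char}, you can first replace $\psi$ by $\psi'$ with $\hat{\psi'}=P_\psi^{-1/2}\mathbf 1_{\mathbb E_\psi}\hat\psi$, which generates the same space and has $P_{\psi'}=\mathbf 1_{\mathbb E_\psi}$, so its translates form a Parseval frame by Theorem~\ref{onb}. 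With that adjustment your proof is complete and matches the paper's.
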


Now, suppose that $|E|=0$. We have to construct $\psi \in W_{0}$ such that $W_{0}=\langle \psi \rangle$ and $\{T_{n}\psi:n \in \Lambda\}$ is a frame for $W_{0}$. For this, we will construct $(\hat{\psi}(\omega+n))_{n \in \Lambda} \in \hat{W_{0}}_{||\omega}$, for $\omega \in D=(D \setminus \eta(V_{1}))\cup \Delta_{2} \cup \Delta_{1}$.

Let $\omega \in D\setminus \eta(V_{1})$, then $\hat{V_{1}}_{||\omega}=\lbrace 0\rbrace$ and hence $\hat{W_{0}}_{||\omega}=\lbrace 0\rbrace$. Let $(\hat{\psi}(\omega+n))_{n \in \Lambda}=0$, for $\omega \in D \setminus \eta(V_{1})$. Now, suppose that $\omega \in \Delta_{2}$, then $\text{dim}\hat{V_{0}}_{||\omega}=1$. Thus, $\text{dim}\hat{W_{0}}_{||\omega}=1$.

Let
$$
(\hat{\psi}(\omega+n))_{n \in \Lambda}=(-1)^{\omega_{0}}\overline{m_{o}^{\omega}}||a_{o}^{\omega}||^{2}a_{e}^{\omega}(n)-(-1)^{\omega_{0}}\overline{m_{e}^{\omega}}||a_{e}^{\omega}||^{2}a_{o}^{\omega}(n) \in \hat{W_{0}}_{||\omega},
$$
then
\begin{align*}
\hat{\psi}(\omega+n)&=(-1)^{\omega_{0}}\overline{m(\sigma\omega+0.1)} \sum_{n' \in \Lambda\setminus \Lambda_1}|\hat{\phi}(\sigma(\omega+n'))|^{2}a_{e}^{\omega}(n)\\
&-(-1)^{\omega_{0}}\overline{m(\sigma\omega)} \sum_{n' \in \Lambda_1}|\hat{\phi}(\sigma(\omega+n'))|^{2}a_{o}^{\omega}(n).
\end{align*}

If $n \in \Lambda_{1}$, then
$$
\hat{\psi}(\omega+n)=(-1)^{\omega_{0}}\overline{m(\sigma\omega+0.1)} \sum_{n' \in \Lambda \setminus \Lambda_{1}}|\hat{\phi}(\sigma(\omega+n'))|^{2}\hat{\phi}(\sigma(\omega+n)),
$$
and if $n \in \Lambda \setminus \Lambda_{1}$, then
$$
\hat{\psi}(\omega+n)=-(-1)^{\omega_{0}}\overline{m(\sigma\omega)} \sum_{n' \in \Lambda_{1}}|\hat{\phi}(\sigma(\omega+n'))|^{2}\hat{\phi}(\sigma(\omega+n)),
$$
or, for each $n \in \Lambda$ we can write
$$
\hat{\psi}(\omega+n)=(-1)^{\omega_{0}+n_0}\overline{m(\sigma(\omega+n)+0.1)} \sum_{n' \in \Lambda \setminus \Lambda_{1}}|\hat{\phi}(\sigma(\omega+n'+n))|^{2}\hat{\phi}(\sigma(\omega+n)).
$$
Thus, for $\omega \in \Delta_{2}+\Lambda$,
$$
\hat{\psi}(\omega)=(-1)^{\omega_{0}}\overline{m(\sigma\omega+0.1)} \sum_{n' \in \Lambda \setminus \Lambda_{1}}|\hat{\phi}(\sigma(\omega+n'))|^{2}\hat{\phi}(\sigma(\omega)).
$$

Let $\omega \in \Delta_{1}=(\Delta_{1}\cap \eta(V_{0}))\cup (\Delta_{1}\cap \eta(V_{0})^{c})$.

If $\omega \in \Delta_{1} \cap \eta(V_{0}),$ then $\text{dim}\hat{V_{0}}_{||\omega}=1$ and hence $\text{dim}\hat{W_{0}}_{||\omega}=0$. Let $\hat{\psi}(\omega)=0$, for every $\omega \in (\Delta_{1}\cap \eta(V_{0}))+\Lambda$. Next, if $\omega \in \Delta_{1} \cap (\eta(V_{0}))^c$, then $\text{dim}\hat{V_{0}}_{||\omega}=0$ and $\hat{W_{0}}_{||\omega}=\hat{V_{1}}_{||\omega}$. In this case, we know that one of $a_{e}^{\omega}$ and $a_{o}^{\omega}$ is zero and the other is non-zero. Let $\hat{\psi}(\omega)=\hat{\phi}(\sigma(\omega))$, for $\omega \in (\Delta_{1} \cap \eta(V_{0})^{c})+\Lambda$.

Thus, $\hat{\psi}(\omega)=m_{\psi}(\omega)\hat{\phi}(\sigma(\omega))$, where

$$
m_{\psi}(\omega)=\left\{\begin{matrix}
(-1)^{\omega_{0}}\overline{m(\sigma\omega+0.1)} \sum_{n' \in \Lambda \setminus\Lambda_{1}}|\hat{\phi}(\sigma(\omega+n'))|^{2}, & \text{if }\omega \in \Delta_{2}+\Lambda \\
1, &\text{ if } \omega \in (\Delta_{1} \cap \eta(V_{0})^{c})+\Lambda \\
0, & \text{ otherwise }
\end{matrix}\right..
$$

Then, $m_{\psi}\in L^{2}(D)$ by result similar to Theorem \ref{frame seq char1} for Cantor dyadic group. We will show that $m_{\psi} \in L^{\infty}(D)$, so that $\psi \in L^{2}(G)$.

\begin{lemma}\label{1}
If $|E|=0$ and $\omega \in \Delta_{2}$ then $\sigma(\omega)$ and $\sigma(\omega)+0.1$ are in $\eta(V_{0})$. Also, for a.e. $\omega \in \Delta_2$, we have $\omega \in \eta(V_0)$.
\end{lemma}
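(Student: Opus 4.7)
The plan is to translate both conclusions into statements about the periodization $P_{\phi}$ and then to exploit the refinement equation $\hat{\phi}(\rho\omega)=m(\omega)\hat{\phi}(\omega)$ together with the $\Lambda$-periodicity of $m$. Throughout, I use implicitly the coset decomposition $\sigma\Lambda=\Lambda\sqcup(0.1\oplus\Lambda)$: $\Lambda_{1}$ is exactly the set of $n\in\Lambda$ with $\sigma n\in\Lambda$, so that $\sigma$ restricts to a bijection $\Lambda_{1}\to\Lambda$, while on $\Lambda\setminus\Lambda_{1}$ the map $n\mapsto \sigma n\ominus 0.1$ is a bijection onto $\Lambda$.

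For the first assertion, I would re-index the two defining sums of $\Delta_{2}$ as
\begin{align*}
\sum_{n\in\Lambda_{1}}|\hat{\phi}(\sigma(\omega\oplus n))|^{2}&=\sum_{n'\in\Lambda}|\hat{\phi}(\sigma\omega\oplus n')|^{2}=P_{\phi}(\sigma\omega),\\
\sum_{n\in\Lambda\setminus\Lambda_{1}}|\hat{\phi}(\sigma(\omega\oplus n))|^{2}&=\sum_{n'\in\Lambda}|\hat{\phi}(\sigma\omega\oplus 0.1\oplus n')|^{2}=P_{\phi}(\sigma\omega\oplus 0.1).
\end{align*}
Since $\omega\in\Delta_{2}$, both periodizations are non-zero. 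Because $\sigma U^{*}\subseteq U^{*}$ and $0.1\oplus\sigma U^{*}\subseteq U^{*}$, both $\sigma\omega$ and $\sigma\omega\oplus 0.1$ lie in $U^{*}$, and Proposition~\ref{606} then yields $\sigma\omega,\sigma\omega\oplus 0.1\in\eta(V_{0})$. This part of the argument does not actually require $|E|=0$.

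For the second assertion, I would apply the refinement relation inside $P_{\phi}$, writing $\hat{\phi}(\omega\oplus n)=m(\sigma(\omega\oplus n))\hat{\phi}(\sigma(\omega\oplus n))$, and then use the $\Lambda$-periodicity of $m$, which gives $m(\sigma(\omega\oplus n))=m(\sigma\omega)$ for $n\in\Lambda_{1}$ and $m(\sigma(\omega\oplus n))=m(\sigma\omega\oplus 0.1)$ for $n\in\Lambda\setminus\Lambda_{1}$. Splitting the sum for $P_{\phi}(\omega)$ over these two classes and invoking the identities from the first part yields the key identity
\begin{equation*}
P_{\phi}(\omega)=|m(\sigma\omega)|^{2}P_{\phi}(\sigma\omega)+|m(\sigma\omega\oplus 0.1)|^{2}P_{\phi}(\sigma\omega\oplus 0.1).
\end{equation*}
For $\omega\in\Delta_{2}$, both periodizations on the right are positive, so $P_{\phi}(\omega)=0$ would force $m(\sigma\omega)=m(\sigma\omega\oplus 0.1)=0$, i.e.\ $\omega\in E$. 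Since $|E|=0$, we conclude $\omega\in\eta(V_{0})$ for a.e.\ $\omega\in\Delta_{2}$.

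The only substantive step is the bookkeeping required for the coset decomposition $\sigma\Lambda=\Lambda\sqcup(0.1\oplus\Lambda)$, which underlies both re-indexings; once it is set up, the rest is a routine manipulation using the refinement equation and the periodicity of $m$. I expect this to be the main obstacle only in the sense of demanding care with notation; no deeper analytic ingredient beyond Proposition~\ref{606} is needed.
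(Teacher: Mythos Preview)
Your argument is correct and follows the same route as the paper: you re-index the two $\Delta_{2}$ sums as $P_{\phi}(\sigma\omega)$ and $P_{\phi}(\sigma\omega\oplus 0.1)$ to get the first assertion, and then derive the identity $P_{\phi}(\omega)=|m(\sigma\omega)|^{2}P_{\phi}(\sigma\omega)+|m(\sigma\omega\oplus 0.1)|^{2}P_{\phi}(\sigma\omega\oplus 0.1)$ (the paper's equation~\eqref{11}) to get the second. The only difference is that you spell out the coset bookkeeping $\sigma\Lambda=\Lambda\sqcup(0.1\oplus\Lambda)$ explicitly, whereas the paper leaves it implicit; note also that in this Cantor dyadic section the paper writes $D$ and $+$ rather than $U^{*}$ and $\oplus$.
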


\begin{proof}
Since $\omega \in \Delta_{2}$, $\sum_{n \in \Lambda_{1}}|\hat{\phi}(\sigma(\omega+n))|^{2} \neq 0$ and $\sum_{n \in \Lambda \setminus \Lambda_{1}}|\hat{\phi}(\sigma(\omega+n))|^{2}\neq 0$. Thus, $\sum_{n \in \Lambda}|\hat{\phi}(\sigma\omega+n)|^{2} \neq 0$ and $\sum_{n \in \Lambda}|\hat{\phi}(\sigma\omega+n+0.1)|^{2}\neq 0$. Hence, we have $\sigma\omega \in \eta(V_{0})$, $\sigma\omega+0.1 \in \eta(V_{0})$. Further,

\begin{equation}\label{11}
\sum_{n \in \Lambda}|\hat{\phi}(\omega+n)|^{2}=|m(\sigma\omega)|^{2}\sum_{n \in \Lambda}|\hat{\phi}(\sigma(\omega)+n)|^{2}+|m(\sigma\omega+0.1)|^{2}\sum_{n \in \Lambda}|\hat{\phi}(\sigma\omega+n+0.1)|^{2}
\end{equation}

As $|E|=0$, therefore except on a set of measure zero, $\sum_{n \in \Lambda}|\hat{\phi}(\omega+n)|^{2} \neq 0$ and hence a.e. $\omega$ is in $\eta(V_{0})$.
\end{proof}

Let $\omega \in \Delta_{2}$ and $|E|=0$. Then by Lemma \ref{1} and Theorem \ref{frame seq char1}, we have
$$
\mathcal C \leq \sum_{n \in \Lambda}|\hat{\phi}(\sigma\omega+n)|^{2} \leq \mathcal D, \quad \mathcal C \leq \sum_{n \in \Lambda}|\hat{\phi}(\sigma\omega+0.1+n)|^{2} \leq \mathcal D. 
$$
Further, for a.e. $\omega \in \Delta_2$
$$
\mathcal C \leq \sum_{n \in \Lambda}|\hat{\phi}(\omega+n)|^{2} \leq \mathcal D.
$$

Now, from equation \eqref{11} $\mathcal C[|m(\sigma\omega)|^{2}+|m(\sigma\omega+0.1)|^{2}] \leq \mathcal D$, or $|m(\sigma\omega)|^{2}+|m(\sigma\omega+0.1)|^{2} \leq \frac{\mathcal D}{\mathcal C}$ a.e.. Thus, $m_{\psi}$ is essentially bounded function and hence $\psi \in L^{2}(\tilde{G})$. Further, we have
$$
\hat{W_{0}}_{||\omega}=\text{span}\{\hat{\psi}_{||\omega}\}, \quad \text{ for a.e. } \omega \in D.
$$

\begin{lemma}
If $|E|=0$, then $W_{0}$ is a principal shift invariant space.
\end{lemma}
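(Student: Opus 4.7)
The plan is to establish $W_0=\langle\psi\rangle$ for the candidate $\psi$ just constructed, whose Fourier transform $\hat\psi(\omega)=m_\psi(\omega)\hat\phi(\sigma\omega)$ has been designed, via the case discussion on the partition $D=(D\setminus\eta(V_1))\cup(\Delta_1\cap\eta(V_0))\cup(\Delta_1\cap\eta(V_0)^c)\cup\Delta_2$, so that $\hat\psi_{\Vert\omega}$ is a generator of the at most one-dimensional fibre $\hat{W_0}_{\Vert\omega}$ for a.e.~$\omega\in D$. The hypothesis $|E|=0$ together with equation~\eqref{11} has already produced the essential bound $\|m_\psi\|_\infty<\infty$, so $\psi\in L^2(G)$.

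The inclusion $\langle\psi\rangle\subseteq W_0$ is immediate, since $\hat\psi_{\Vert\omega}\in\hat{W_0}_{\Vert\omega}$ a.e.\ gives $\psi\in W_0$ and $W_0$ is closed and shift-invariant. For the reverse inclusion I would argue by fibres and orthogonal complements. First I would verify that $\lbrace T_n\psi:n\in\Lambda\rbrace$ is a Bessel sequence: using $\sigma\Lambda\subseteq\Lambda$ and the upper frame bound $\mathcal D$ of $\lbrace T_n\phi\rbrace$,
\[
[\hat\psi,\hat\psi](\omega)\le\|m_\psi\|_\infty^2\sum_{n'\in\Lambda}|\hat\phi(\sigma\omega+n')|^2\le\|m_\psi\|_\infty^2\,\mathcal D\quad\text{for a.e.~}\omega.
\]
With Bessel in hand, a standard Fourier-coefficient argument applied to the $\Lambda$-periodic function $[\hat g,\hat\psi]\in L^1(D)$ identifies
\[
\langle\psi\rangle^\perp=\lbrace g\in L^2(G):\langle\hat g_{\Vert\omega},\hat\psi_{\Vert\omega}\rangle_{l^2(\Lambda)}=0\text{ for a.e.~}\omega\in D\rbrace.
\]

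Finally, for any $f\in W_0$ the fibre $\hat f_{\Vert\omega}$ lies in $\hat{W_0}_{\Vert\omega}=\text{span}\lbrace\hat\psi_{\Vert\omega}\rbrace$, so $\hat f_{\Vert\omega}=\alpha(\omega)\hat\psi_{\Vert\omega}$ for a measurable scalar $\alpha$. For every $g\in\langle\psi\rangle^\perp$, the fibre form of Plancherel (obtained by tiling $G^*$ by $\Lambda$-translates of $D$) gives
\[
\langle f,g\rangle=\int_D\alpha(\omega)\,\overline{\langle\hat g_{\Vert\omega},\hat\psi_{\Vert\omega}\rangle}\,d\omega=0,
\]
so $f\in(\langle\psi\rangle^\perp)^\perp=\langle\psi\rangle$. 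The main obstacle in this plan is the combination of the Bessel estimate with the $C$-bracket characterisation of $\langle\psi\rangle^\perp$; once those pieces are in place, the fibre argument closes the inclusion $W_0\subseteq\langle\psi\rangle$ without further work.
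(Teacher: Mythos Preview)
Your argument is correct and follows the same fibre-based route as the paper: the paper records exactly the identity $\hat{W_0}_{\Vert\omega}=\text{span}\{\hat\psi_{\Vert\omega}\}$ for a.e.\ $\omega\in D$ and then states the lemma without further justification, whereas you spell out the passage from this fibre identity to $W_0=\langle\psi\rangle$ via the Bessel property and the orthogonal-complement characterisation. One small correction: the inclusion $\sigma\Lambda\subseteq\Lambda$ is not quite right, and the displayed Bessel bound should read $P_\phi(\sigma\omega)+P_\phi(\sigma\omega+0.1)\le 2\mathcal D$ rather than $\mathcal D$, but this does not affect the argument.
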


Next, we have to prove that $\{T_{n}\psi:n \in \Lambda\}$ is a frame for $W_{0}$. From the definition of $\hat{\psi}$, $\eta(W_{0})=\Delta_{2} \cup (\Delta_{1} \cap (\eta(V_{0})^{c}))$ (modulo null sets).

For a.e. $\omega \in \Delta_{2}$,
\begin{align*}
\sum_{n \in \Lambda}|\hat{\psi}(\omega+n)|^{2}&=|m(\sigma(\omega)+0.1)|^{2}|\sum_{n' \in \Lambda \setminus \Lambda_{1}}|\hat{\phi}(\sigma(\omega+n'))|^{2}|^{2}\sum_{n \in \Lambda_{1}}|\hat{\phi}(\sigma(\omega+n))|^{2}\\
&+|m(\sigma(\omega))|^{2}|\sum_{n' \in \Lambda_{1}}|\hat{\phi}(\sigma(\omega+n'))|^{2}|^{2}\sum_{n \in \Lambda \setminus \Lambda_{1}}|\hat{\phi}(\sigma(\omega+n))|^{2}\\
&=(\sum_{n \in \Lambda}|\hat{\phi}(\omega+n)|^{2})(\sum_{n \in \Lambda}|\hat{\phi}(\sigma\omega+n)|^{2})(\sum_{n \in \Lambda}|\hat{\phi}(\sigma\omega+0.1+n)|^{2}).
\end{align*}

Hence,
$$
\mathcal C^{3} \leq \sum_{n \in \Lambda}|\hat{\psi}(\omega+n)|^{2} \leq \mathcal D^{3}.
$$

Now, suppose that $\omega \in \Delta_{1} \cap(\eta(V_{0}))^{c}$, then $\hat{\psi}(\omega)=\hat{\phi}(\sigma(\omega))$. Since $\omega \in \Delta_{1}$, either $a_{e}^{\omega}$, or $a_{o}^{\omega}$ is non-zero, i.e. either $\sigma\omega$, or $\sigma\omega+0.1$ belongs to $\eta(V_{0})$.

$$
\sum_{n \in \Lambda}|\hat{\psi}(\omega+n)|^{2}=\sum_{n \in \Lambda}|\hat{\phi}(\sigma(\omega+n))|^{2}=\sum_{n \in \Lambda}|\hat{\phi}(\sigma\omega+n))|^{2}+\sum_{n \in \Lambda}|\hat{\phi}(\sigma\omega+n+0.1))|^{2}.
$$

Therefore, we have $\mathcal C \leq \sum_{n \in \Lambda}|\hat{\psi}(\omega+n)|^{2} \leq \mathcal D$. Let $\mathcal C'=\min\lbrace\mathcal C, \mathcal C^3\rbrace$ and $\mathcal D'=\max\lbrace \mathcal D, \mathcal D^3 \rbrace$. Then, $\{T_{n}\psi:n \in \Lambda\}$ is a frame for $W_{0}$ with bounds $\mathcal C'$ and $\mathcal D'$.

\begin{lemma}
Suppose that $E=E_{m}$ and $|E|=0$. Let $m' \in L^{2}(D)$ be such that $\hat{\phi}(\rho \omega)=m'(\omega)\hat{\phi}(\omega)$. Then, $|E_{m'}|=0$, where
$E_{m'}=\{\omega \in \Delta_{2}:m'(\sigma(\omega))=m'(\sigma(\omega)+0.1)=0\}$.
\end{lemma}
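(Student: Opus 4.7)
The plan is to show that any two low-pass filters for the same scaling function $\phi$ must agree almost everywhere on $\eta(V_0)$, after which the defining conditions of $E_m$ and $E_{m'}$ will coincide modulo null sets on $\Delta_2$, forcing $|E_{m'}| = |E_m| = 0$.

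First I would combine the two refinement equations $\hat{\phi}(\rho\omega) = m(\omega)\hat{\phi}(\omega) = m'(\omega)\hat{\phi}(\omega)$ to get the purely algebraic identity $(m - m')\hat{\phi} = 0$ a.e.\ on $G^*$. Hence $m = m'$ a.e.\ on $\operatorname{supp}\hat{\phi}$. Since both $m$ and $m'$ are $\Lambda$-periodic, for any $\omega \in D$ with $m(\omega) \neq m'(\omega)$ I get $\hat{\phi}(\omega + n) = 0$ for every $n \in \Lambda$, except on a countable union of $\Lambda$-translates of a fixed null set, which is still null. Therefore $\{\omega \in D : m(\omega) \neq m'(\omega)\}$ is contained, up to a null set, in $\{\omega \in D : P_{\phi}(\omega) = 0\}$, which by Proposition \ref{606} is $D \setminus \eta(V_0)$. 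Consequently $m = m'$ a.e.\ on $\eta(V_0)$, and by $\Lambda$-periodicity on the full saturation $\eta(V_0) \oplus \Lambda$.

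Next I would invoke Lemma \ref{1}: since $|E| = 0$, for a.e.\ $\omega \in \Delta_2$ both $\sigma\omega$ and $\sigma\omega + 0.1$ lie in $\eta(V_0)$. Evaluating the previous a.e.\ identity $m = m'$ at these two points yields $m(\sigma\omega) = m'(\sigma\omega)$ and $m(\sigma\omega + 0.1) = m'(\sigma\omega + 0.1)$ for a.e.\ $\omega \in \Delta_2$. To pass from an a.e.\ statement in the variable $\sigma\omega$ to one in $\omega$, I would use that $\sigma$ is a measurable group automorphism of $G^*$, so it scales Haar measure by a fixed positive constant and in particular sends null sets to null sets, and the same for $\sigma^{-1}$.

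At this point the two sets of conditions $m(\sigma\omega) = m(\sigma\omega + 0.1) = 0$ and $m'(\sigma\omega) = m'(\sigma\omega + 0.1) = 0$ are equivalent on a full-measure subset of $\Delta_2$, so $E_m \triangle E_{m'}$ has measure zero, and from $|E_m| = 0$ we conclude $|E_{m'}| = 0$. The main obstacle is not algebraic — the core identity $(m - m')\hat{\phi} = 0$ is immediate — but measure-theoretic bookkeeping: one has to track carefully that each a.e.\ reduction (first the refinement relation, then the periodization in $\Lambda$, then the push-forward by $\sigma$) only discards a null set, so that the exceptional set inside $\Delta_2$ remains null at the end.
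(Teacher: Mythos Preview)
Your argument is correct, and it does hinge on Lemma \ref{1} just as the paper's one-line proof indicates, so in spirit the approaches match. The paper, however, almost certainly has a shorter route in mind than your detour through ``$m=m'$ a.e.\ on $\eta(V_0)$'': for any $\omega\in E_{m'}\subseteq\Delta_2$, equation \eqref{11} applied with the filter $m'$ gives $P_\phi(\omega)=0$, so $\omega\notin\eta(V_0)$; thus $E_{m'}\subseteq\Delta_2\setminus\eta(V_0)$, and the \emph{second} conclusion of Lemma \ref{1} (which uses the hypothesis $|E_m|=0$) says this set is null. Your version instead uses the \emph{first} conclusion of Lemma \ref{1} (which, incidentally, holds for every $\omega\in\Delta_2$ without needing $|E|=0$) to pull the a.e.\ identity $m=m'$ back from $\eta(V_0)$ to $\Delta_2$ via $\sigma$. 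Both are valid; the paper's route avoids the comparison of the two filters altogether and the measure-theoretic bookkeeping you flagged as the main obstacle.
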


\begin{proof}
Follows from Lemma \ref{1}.
\end{proof}

\begin{corollary}
If $|E|=0$, then for a.e. $\omega \in \Delta_{2}$,
$$
\frac{\mathcal C}{\mathcal D} \leq |m(\sigma(\omega))|^{2}+|m(\sigma(\omega)+0.1)|^{2} \leq \frac{\mathcal D}{\mathcal C}.
$$
\end{corollary}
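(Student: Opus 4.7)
The plan is to derive both inequalities directly from Equation \eqref{11} combined with the two-sided frame bounds established in Lemma \ref{1}. Recall that Equation \eqref{11} reads
$$
\sum_{n \in \Lambda}|\hat{\phi}(\omega+n)|^{2}=|m(\sigma\omega)|^{2}\sum_{n \in \Lambda}|\hat{\phi}(\sigma\omega+n)|^{2}+|m(\sigma\omega+0.1)|^{2}\sum_{n \in \Lambda}|\hat{\phi}(\sigma\omega+n+0.1)|^{2},
$$
and Lemma \ref{1} (together with the frame bound characterization from Theorem \ref{frame seq char1}) tells us that, for a.e. $\omega \in \Delta_2$, each of the three sums $\sum_{n \in \Lambda}|\hat{\phi}(\omega+n)|^{2}$, $\sum_{n \in \Lambda}|\hat{\phi}(\sigma\omega+n)|^{2}$, $\sum_{n \in \Lambda}|\hat{\phi}(\sigma\omega+0.1+n)|^{2}$ lies in the interval $[\mathcal C,\mathcal D]$.

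For the upper bound, I would bound the left side of Equation \eqref{11} above by $\mathcal D$ and bound each of the two sums over $\sigma\omega$ and $\sigma\omega+0.1$ below by $\mathcal C$; this yields $\mathcal C\bigl(|m(\sigma\omega)|^{2}+|m(\sigma\omega+0.1)|^{2}\bigr)\leq \mathcal D$, and dividing by $\mathcal C$ gives $|m(\sigma\omega)|^{2}+|m(\sigma\omega+0.1)|^{2}\leq \mathcal D/\mathcal C$. For the lower bound, I would reverse the inequalities: bound the left side below by $\mathcal C$ and each of the $\sigma\omega$--sums above by $\mathcal D$, obtaining $\mathcal C\leq \mathcal D\bigl(|m(\sigma\omega)|^{2}+|m(\sigma\omega+0.1)|^{2}\bigr)$, hence $|m(\sigma\omega)|^{2}+|m(\sigma\omega+0.1)|^{2}\geq \mathcal C/\mathcal D$.

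There is essentially no real obstacle here: the substantive work has already been absorbed into Lemma \ref{1}, which guarantees both that $\sigma\omega$ and $\sigma\omega+0.1$ lie in $\eta(V_0)$ (so that the frame bounds apply to the corresponding periodization sums via Theorem \ref{frame seq char1}) and that $\omega \in \eta(V_0)$ for a.e. $\omega\in\Delta_2$ (so that the frame bounds also control $\sum_{n\in\Lambda}|\hat{\phi}(\omega+n)|^2$). The only point worth double-checking in the write-up is that the null set on which Lemma \ref{1} fails is exactly the one excluded in the statement of the corollary, so that the a.e.~qualifier is consistent throughout.
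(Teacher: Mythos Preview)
Your proposal is correct and follows exactly the route the paper indicates: the paper's proof is the single line ``follows from Lemma \ref{1} and Equation \eqref{11}'', and immediately before the corollary the paper has already carried out your upper-bound computation verbatim. Your write-up simply spells out both directions of this argument, with the same ingredients and in the same order.
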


\noindent Proof of the corollary follows from Lemma \ref{1} and Equation \eqref{11}. Converse of this result, that is, \textit{if the above two inequalities hold for a.e. $\omega \in \Delta_2$, then $|E|=0$}, is trivially true.

Thus, the necessary and sufficient conditions for the existence of frame wavelet can be given as;

\begin{theorem}
Let $(V_{j})$ be an FMRA with scaling function $\phi$. Let $m \in L^{2}(D)$ be a $\Lambda$-periodic function such that $\hat{\phi}(\rho\omega)=m(\omega)\hat{\phi}(\omega)$. Then there exists a frame wavelet $\psi \in W_{0}$, where $V_{1}=V_{0}\bigoplus W_{0}$ (orthogonal direct sum), that is $\{T_{n}\psi:n \in \Lambda\}$ is a frame for $W_{0}$ and hence $\{2^{j/2}\psi(\rho^{j}.-n):n \in \Lambda, j \in \mathbb{Z}\}$ is a frame for $L^{2}(G)$ if and only if one of the following equivalent conditions is satisfied.

\begin{enumerate}
\item[(i)] $m(\sigma\omega)$ and $m(\sigma\omega+0.1)$ are not simultaneously zero for a.e. $\omega \in \Delta_{2}$.
\item[(ii)] There exists positive constants $\mathcal C$ and $\mathcal D$ such that for a.e. $\omega \in \Delta_{2}$.
$$
\mathcal C \leq |m(\sigma(\omega))|^{2}+|m(\sigma(\omega)+0.1)|^{2} \leq \mathcal D.
$$
\end{enumerate}
\end{theorem}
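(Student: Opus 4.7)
The plan is to assemble the machinery developed throughout Section \ref{6-2} into the stated equivalence, organized as three separate blocks: equivalence of conditions (i) and (ii), necessity of $|E|=0$ for the existence of a frame wavelet, and its sufficiency.

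For the equivalence (i) $\Leftrightarrow$ (ii), I observe that condition (i) is literally the statement $|E|=0$ with $E=\{\omega\in\Delta_{2}:m(\sigma\omega)=m(\sigma\omega+0.1)=0\}$, so the implication (i) $\Rightarrow$ (ii) is exactly the content of the corollary immediately preceding this theorem. The converse (ii) $\Rightarrow$ (i) is immediate, since the strict positivity of the lower bound $\mathcal C$ forbids $|m(\sigma\omega)|^{2}+|m(\sigma\omega+0.1)|^{2}$ from vanishing on a subset of $\Delta_{2}$ of positive measure; this is already noted as trivial in the paper.

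For necessity, if there exists $\psi\in W_{0}$ with $W_{0}=\langle\psi\rangle$ and $\{T_{n}\psi:n\in\Lambda\}$ a frame for $W_{0}$, then $W_{0}$ is a principal shift invariant space; the first lemma in this section then forces $|E|=0$. The well-posedness of this criterion, i.e.\ its independence of the particular choice of $m$ in the refinement equation $\hat\phi(\rho\omega)=m(\omega)\hat\phi(\omega)$, is supplied by the later lemma showing $|E_{m}|=0$ iff $|E_{m'}|=0$ for any other admissible $m'$.

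For sufficiency, assume $|E|=0$ and use the explicit construction already carried out: set $\hat\psi(\omega)=m_{\psi}(\omega)\hat\phi(\sigma\omega)$ with $m_{\psi}$ defined piecewise on $\Delta_{2}+\Lambda$, on $(\Delta_{1}\cap\eta(V_{0})^{c})+\Lambda$, and on the complement, as in the paragraph preceding Lemma \ref{1}. The earlier computations already show that $\hat{W_{0}}_{\Vert\omega}=\mathrm{span}\{\hat\psi_{\Vert\omega}\}$ a.e., that Lemma \ref{1} combined with the periodization identity \eqref{11} yields $m_{\psi}\in L^{\infty}(D)$ and hence $\psi\in L^{2}(G)$, and that the bracket $\sum_{n\in\Lambda}|\hat\psi(\omega+n)|^{2}$ is sandwiched between $\mathcal C'=\min\{\mathcal C,\mathcal C^{3}\}$ and $\mathcal D'=\max\{\mathcal D,\mathcal D^{3}\}$ on $\eta(W_{0})$; the Cantor-dyadic-group analogue of Theorem \ref{frame seq char1} then delivers the frame property of $\{T_{n}\psi:n\in\Lambda\}$ in $W_{0}$. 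To promote this to a wavelet frame for $L^{2}(G)$, set $W_{j}=\mathbb D^{j}W_{0}$; applying $\mathbb D^{j}$ to $V_{1}=V_{0}\oplus W_{0}$ gives $V_{j+1}=V_{j}\oplus W_{j}$, while the FMRA density and intersection axioms yield $L^{2}(G)=\bigoplus_{j\in\mathbb Z}W_{j}$ orthogonally, so the unitarity of $\mathbb D$ transfers the bounds $\mathcal C',\mathcal D'$ unchanged to each level and the full family $\{2^{j/2}\psi(\rho^{j}\cdot-n):n\in\Lambda,\,j\in\mathbb Z\}$ is a frame for $L^{2}(G)$. I anticipate no genuine obstacle here, since every nontrivial step has already been carried out in the preceding lemmas; the subtlest point is the independence of the criterion from the choice of $m$, which is precisely what the auxiliary lemma on $|E_{m'}|$ secures.
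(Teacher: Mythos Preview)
Your proposal is correct and follows essentially the same approach as the paper: the theorem there is stated as a summary of the preceding lemmas, and you have organized those lemmas (the $|E|>0$ obstruction, the explicit construction of $\psi$ via $m_{\psi}$, Lemma \ref{1} with \eqref{11}, the frame bounds $\mathcal C',\mathcal D'$, the independence-of-$m$ lemma, and the corollary with its trivial converse) into exactly the same three-block argument. Your explicit passage from a frame for $W_{0}$ to a frame for $L^{2}(G)$ via the orthogonal decomposition $L^{2}(G)=\bigoplus_{j}\mathbb D^{j}W_{0}$ merely spells out what the paper leaves to the word ``hence.''
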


\end{document}